\newtheorem{theorem}{Theorem}[section]
\newtheorem{lemma}[theorem]{Lemma}
\newtheorem{proposition}[theorem]{Proposition}
\theoremstyle{definition}
\newtheorem{definition}[theorem]{Definition}
\theoremstyle{remark}
\newtheorem{remark}[theorem]{Remark}
\newtheorem{claim_add_gen}{Claim}
\title{{\Large Tilting modules arising from two-term tilting complexes}}
\author{Hiroki Abe}
\date{}
\begin{document}

\maketitle

\footnote{ 
2010 \textit{Mathematics Subject Classification}.
Primary 16D90, 16E35; Secondary 16G20
}
\footnote{ 
\textit{Key words}. 
tilting complex, tilting module, torsion theory, .
}

\begin{abstract}
We show that every two-term tilting complex over an Artin algebra has a tilting module over a certain factor algebra as a homology group. Also, we determine the endomorphism algebra of such a homology group, which is given as a certain factor algebra of the endomorphism algebra of the two-term tilting complex. Thus, every derived equivalence between Artin 
algebras given by a two-term tilting complex induces a derived equivalence between the corresponding factor algebras.
\end{abstract}

\section{Introduction}

In the representation theory of Artin algebras, the connection between tilting modules and torsion theories has been well studied. Brenner and Butler introduced the notion of tilting modules and showed that tilting modules induce torsion theories for module categories (\cite{BB}). Conversely, several authors asked when torsion theories determine tilting modules. Hoshino gave a construction of tilting modules from torsion theories for under certain conditions (\cite{Ho}). Smal{\o} characterized torsion theories which determine tilting modules using the notion of covariantly finite subcategories and contravariantly finite subcategories (\cite{Sm}). On the other hand, Rickard introduced the notion of tilting complexes as a generalization of tilting modules and showed that tilting complexes induce equivalences between derived categories of module categories, which are called derived equivalences (\cite{Ri}). Then Hoshino, Kato, and Miyachi pointed out that two-term tilting complexes induce torsion theories for module categories and studied the connection between two-term tilting complexes and torsion theories (\cite{HKM}). In this note, we show  that the torsion theories introduced by Hoshino, Kato, and Miyachi determine tilting modules. \\
\indent
Let $A$ be an Artin algebra and $T^{\bullet}$ a two-term tilting complex of $A$. We prove that the $0$-th homology group $\mathrm{H}^{0}(T^{\bullet})$ is a tilting module of $A/\mathfrak{a}$, where $\mathfrak{a}$ is the annihilator of $\mathrm{H}^{0}(T^{\bullet})$ (Theorem \ref{tilting_module}). Furthermore, we determine the endomorphism algebra of $\mathrm{H}^{0}(T^{\bullet})$. Let $B$ be the endomorphism algebra of $T^{\bullet}$. Then the endomorphism algebra of $\mathrm{H}^{0}(T^{\bullet})$ is given as $B/\mathfrak{b}$, where $\mathfrak{b}$ is the annihilator of $\mathrm{H}^{0}(T^{\bullet})$ (Theorem \ref{endring}). Thus, we know that any derived equivalence given by arbitrary two-term tilting complex always induces a derived equivalence between the corresponding factor algebras. \\ 
\indent
Throughout this note, $R$ is a commutative Artinian local ring and $A$ is an Artin $R$-algebra, i.e., $A$ is a ring endowed with a ring homomorphism $R \to A$ whose image is contained in the center of $A$ and $A$ is finitely generated as a $R$-module. We always assume that $A$ is connected, basic, and not simple. We denote by $\mathrm{mod}\text{-}A$ the category of finitely generated right $A$-modules and by $\mathcal{P}_{A}$ (resp., $\mathcal{I}_{A}$) the full subcategory of $\mathrm{mod}\text{-}A$ consisting of projective (resp., injective) modules. We denote by $A^\mathrm{op}$ the opposite ring of $A$ and consider left $A$-modules as right $A^\mathrm{op}$-modules. Sometimes, we use the notation $X_{A}$ (resp., $_{A}X$) to stress that the module $X$ considered is a right (resp., left) $A$-module. Let $X \in \mathrm{mod}\text{-}A$. We denote by $\mathrm{gen}(X)$ (resp., $\mathrm{cog}(X)$) the full subcategory of $\mathrm{mod}\text{-}A$ whose objects are generated (resp., cogenerated) by $X$. We denote by $\mathrm{add}(X)$ the full subcategory of $\mathrm{mod}\text{-}A$ whose objects are direct summands of finite direct sums of copies of $X$ and by $X^{(n)}$ the direct sum of $n$ copies of $X$. We denote by $\mathscr{K}(\mathrm{mod}\text{-}A)$, for short $\mathscr{K}(A)$, the homotopy category of cochain complexes over $\mathrm{mod}\text{-}A$ and by $\mathscr{K}^{\mathrm{b}}(\mathcal{P}_{A})$ the full triangulated subcategory of $\mathscr{K}(\mathrm{mod}\text{-}A)$ consisting of bounded complexes over $\mathcal{P}_{A}$. We denote by $\mathscr{D}(\mathrm{mod}\text{-}A)$, for short $\mathscr{D}(A)$, the derived category of cochain complexes over $\mathrm{mod}\text{-}A$ and by $\mathscr{D}^{\mathrm{b}}(\mathrm{mod}\text{-}A)$ the full triangulated subcategory of $\mathscr{D}(\mathrm{mod}\text{-}A)$ consisting of complexes which have bounded homology. We consider modules as complexes concentrated in degree zero. \\
\indent
We set $D=\mathrm{Hom}_{R}(-,E(R/\mathfrak{m}))$, where $\mathfrak{m}$ is the maximal ideal of $R$ and $E(R/\mathfrak{m})$ is an injective envelope of $R/\mathfrak{m}$, and set $\nu = DA \otimes_{A} -$, which is called the Nakayama functor of $A$. The Nakayama functor $\nu : \mathrm{mod}\text{-}A \to \mathrm{mod}\text{-}A$ induces an equivalence $\mathcal{P}_{A} \overset{\sim}{\to} \mathcal{I}_{A}$. We denote by $\nu^{-1}=\mathrm{Hom}_{A}(DA,-)$ the quasi-inverse of  $\nu$. Let $X \in \mathrm{mod}\text{-}A$, and let $P^{-1} \overset{f}{\to} P^{0} \to X \to 0$ be a minimal projective presentation. We set ${\tau}X = \mathrm{Ker}\ {\nu}(f)$, which is called the Auslander--Reiten translation. Then $\tau$ induces an equivalence between the projectively stable category of $\mathrm{mod}\text{-}A$ and the injectivitely stable category of $\mathrm{mod}\text{-}A$. We denote by $\tau^{-1}$ the quasi-inverse of $\tau$. \\
\indent
We refer to \cite{HR} for the definition and basic properties of tilting modules, to \cite{Har} and \cite{Ve} for basic results in the theory of derived categories, and to \cite{Ri} for definitions and basic properties of tilting complexes and derived equivalences. \\
\indent
The author would like to thank M. Hoshino for his helpful advice. 

\section{Preliminaries}

In this section, we recall some results on stable torsion theories given by Hoshino, Kato, and Miyachi (\cite{HKM}). We need the relationship between stable torsion theories and two-term tilting complexes of Artin algebras. 

\begin{definition}[{\cite{Di}}]\label{def_stable_torsion_theory}
A pair $(\mathcal{T},\mathcal{F})$ of full subcategories $\mathcal{T}$, $\mathcal{F}$ in $\mathrm{mod}\text{-}A$ is said to be {\it a torsion theory} for $\mathrm{mod}\text{-}A$ if the following conditions are satisfied:
\begin{enumerate}
\item[(1)] $\mathcal{T} \cap \mathcal{F} = \{ 0 \}$;  
\item[(2)] $\mathcal{T}$ is closed under factor modules; 
\item[(3)] $\mathcal{F}$ is closed under submodules; and
\item[(4)] for any $X \in \mathcal{A}$, there exists an exact sequence $0 \to X^{\prime} \to X \to X^{\prime \prime} \to 0$ with $X^{\prime} \in \mathcal{T}$ and $X^{\prime \prime} \in \mathcal{F}$.
\end{enumerate}
In particular, $\mathcal{T}$ (resp., $\mathcal{F}$) is said to be a torsion (resp., torsion-free) class. Furthermore, if $\mathcal{T}$ is stable under the Nakayama functor $\nu$, then $(\mathcal{T},\mathcal{F})$ is said to be {\it a stable torsion theory} for $\mathrm{mod}\text{-}A$. 
\end{definition}

\begin{remark}\label{rem_torsion_theory}
Let $(\mathcal{T}, \mathcal{F})$ be a torsion theory for $\mathrm{mod}\text{-}A$. 
\begin{enumerate}
\item[(1)] $\mathcal{T}$ and $\mathcal{F}$ are closed under extensions. 
\item[(2)] $(\mathcal{T}, \mathcal{F})$ is a stable torsion theory if and only if $\mathcal{F}$ is stable under $\nu^{-1}$. 
\end{enumerate}
\end{remark}

Let $T^{\bullet} \in \mathscr{K}^{\mathrm{b}}(\mathcal{P}_{A})$ be a two-term complex: 
\[
T^{\bullet} : \cdots \to 0 \to T^{-1} \overset{\alpha}{\to} T^{0} \to 0 \to \cdots .          
\]
We set the following subcategories in $\mathrm{mod}\text{-}A$: 
\[
\mathcal{T}(T^{\bullet}) = \mathrm{Ker}\ \mathrm{Hom}_{\mathscr{K}(A)}(T^{\bullet}[-1], - ), \ 
\mathcal{F}(T^{\bullet}) = \mathrm{Ker}\ \mathrm{Hom}_{\mathscr{K}(A)}(T^{\bullet}, - ). 
\]

\begin{proposition}[{\cite[Proposition 5.5]{HKM}}]\label{tilting_stable_torsion_theory}
The following are equivalent. 
\begin{enumerate}
\item[(1)] $T^{\bullet}$ is a tilting complex. 
\item[(2)] $(\mathcal{T}(T^{\bullet}),\mathcal{F}(T^{\bullet}))$ is a stable torsion theory for $\mathrm{mod}\text{-}A$. 
\end{enumerate}
\end{proposition}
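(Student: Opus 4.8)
The plan is to reduce both defining conditions to a single map of Hom-groups and then to import the derived-category structure carried by a tilting complex. Writing $\alpha^{*}=\mathrm{Hom}_{A}(\alpha,X)$, a direct inspection of chain maps and homotopies out of the two-term complex gives natural isomorphisms $\mathrm{Hom}_{\mathscr K(A)}(T^{\bullet},X)\cong\ker\alpha^{*}$ and $\mathrm{Hom}_{\mathscr K(A)}(T^{\bullet}[-1],X)\cong\mathrm{coker}\,\alpha^{*}$ for every $X\in\mathrm{mod}\text{-}A$; thus $X\in\mathcal F(T^{\bullet})$ exactly when $\alpha^{*}$ is injective and $X\in\mathcal T(T^{\bullet})$ exactly when $\alpha^{*}$ is surjective. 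Since $T^{\bullet}$ is a bounded complex of projectives these groups compute $\mathrm{Hom}_{\mathscr D(A)}(T^{\bullet},X)$ and $\mathrm{Hom}_{\mathscr D(A)}(T^{\bullet},X[1])$, while $\mathrm{Hom}_{\mathscr D(A)}(T^{\bullet},X[n])=0$ for $n\neq 0,1$ because $T^{\bullet}$ sits in degrees $-1,0$. Feeding a short exact sequence of modules into the cohomological functor $\mathrm{Hom}_{\mathscr D(A)}(T^{\bullet},-)$ then produces a six-term exact sequence, from which I read off closure of $\mathcal T$ under factor modules, of $\mathcal F$ under submodules, and of both under extensions; none of this uses more than the two-term shape. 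I also record the elementary identity $\mathrm{Hom}_{A}(M,-)\cong\mathrm{Hom}_{\mathscr K(A)}(T^{\bullet},-)$ on modules, where $M:=\mathrm{coker}\,\alpha=\mathrm H^{0}(T^{\bullet})$.

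For (1)$\Rightarrow$(2) I first dispose of $\mathcal T\cap\mathcal F=\{0\}$: a module in both satisfies $\mathrm{Hom}_{\mathscr D(A)}(T^{\bullet},X[n])=0$ for all $n$, and since a tilting complex generates $\mathscr K^{\mathrm b}(\mathcal P_{A})\ni A$ as a thick subcategory, the right orthogonal $\{Y:\mathrm{Hom}_{\mathscr D(A)}(T^{\bullet},Y[n])=0\ \forall n\}$ vanishes, so $X=0$. Next I would show $M\in\mathcal T$: applying $\mathrm{Hom}_{\mathscr D(A)}(T^{\bullet},-[1])$ to the truncation triangle $\mathrm H^{-1}(T^{\bullet})[1]\to T^{\bullet}\to M\to\mathrm H^{-1}(T^{\bullet})[2]$ and killing the boundary terms by the two-term shape exhibits $\mathrm{Hom}_{\mathscr D(A)}(T^{\bullet},M[1])$ as a quotient of $\mathrm{Hom}_{\mathscr D(A)}(T^{\bullet},T^{\bullet}[1])=0$; this is where the positive rigidity of the tilting complex enters. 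Granting $M\in\mathcal T$, axiom (4) is furnished by the trace sequence $0\to\mathrm{tr}_{M}(X)\to X\to X/\mathrm{tr}_{M}(X)\to 0$, where $\mathrm{tr}_{M}(X)$ is the sum of the images of all homomorphisms $M\to X$: its first term lies in $\mathrm{gen}(M)\subseteq\mathcal T$, and $\mathrm{Hom}_{A}(M,X/\mathrm{tr}_{M}(X))=0$ translates, via the identity above, into $X/\mathrm{tr}_{M}(X)\in\mathcal F$. The same identity shows that any $X\in\mathcal T$ equals its own trace, so that $\mathcal T=\mathrm{gen}(M)$.

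The remaining point is stability, and I would handle it so as to settle both directions of the equivalence at once. Using Auslander--Reiten--Serre duality $\mathrm{Hom}_{\mathscr D(A)}(T^{\bullet},Y)\cong D\,\mathrm{Hom}_{\mathscr D(A)}(Y,\nu T^{\bullet})$, I rewrite $\mathrm{Hom}_{\mathscr D(A)}(T^{\bullet},T^{\bullet}[-1])\cong D\,\mathrm{Hom}_{\mathscr D(A)}(T^{\bullet},\nu T^{\bullet}[1])$; decomposing the two-term injective complex $\nu T^{\bullet}$ by its homology triangle, and using $\mathrm H^{0}(\nu T^{\bullet})=\nu M$ since $\nu$ is right exact, identifies this with $\mathrm{Hom}_{\mathscr D(A)}(T^{\bullet},\nu M[1])$. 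Hence the negative rigidity $\mathrm{Hom}_{\mathscr D(A)}(T^{\bullet},T^{\bullet}[-1])=0$ is equivalent to $\nu M\in\mathcal T$. In the forward direction this vanishing holds as $T^{\bullet}$ is tilting, and together with $\mathcal T=\mathrm{gen}(M)$ and the right exactness of $\nu$ (which turns an epimorphism $M^{(n)}\twoheadrightarrow X$ into $(\nu M)^{(n)}\twoheadrightarrow\nu X$) it propagates $\nu M\in\mathcal T$ to $\nu X\in\mathcal T$ for all $X\in\mathcal T$, giving stability. For (2)$\Rightarrow$(1) everything runs in reverse: $\mathcal T\cap\mathcal F=\{0\}$ together with a degenerate two-column hyper-Ext spectral sequence $\mathrm{Hom}_{\mathscr D(A)}(T^{\bullet},\mathrm H^{q}(Y)[p])\Rightarrow\mathrm{Hom}_{\mathscr D(A)}(T^{\bullet},Y[p+q])$ forces the right orthogonal to vanish, so $T^{\bullet}$ generates; $M\in\mathcal T$ now comes from axiom (4), since the torsion-free quotient in the canonical sequence for $M$ is the target of a map from $M$, i.e.\ of an element of $\mathrm{Hom}_{\mathscr D(A)}(T^{\bullet},-)$ that must vanish on $\mathcal F$, whence that quotient is zero; the truncation long exact sequence then embeds $\mathrm{Hom}_{\mathscr D(A)}(T^{\bullet},T^{\bullet}[1])$ into $\mathrm{Hom}_{\mathscr D(A)}(T^{\bullet},M[1])=0$, and the negative rigidity is exactly $\nu M\in\mathcal T$, supplied by stability applied to $M\in\mathcal T$.

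The hard part will be precisely this stability/negative-rigidity correspondence: the $\ker$--$\mathrm{coker}$ reduction, the closure axioms, and the generation dévissage are essentially formal, but matching Nakayama-stability of $(\mathcal T,\mathcal F)$ with $\mathrm{Hom}_{\mathscr D(A)}(T^{\bullet},T^{\bullet}[-1])=0$ rests on the Serre-duality computation and on the description $\mathcal T=\mathrm{gen}(\mathrm H^{0}(T^{\bullet}))$, and I would need to check carefully that the underived right exactness of $\nu$ genuinely suffices to pass from $\nu\mathrm H^{0}(T^{\bullet})\in\mathcal T$ to $\nu X\in\mathcal T$ for every $X\in\mathcal T$.
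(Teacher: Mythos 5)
The paper itself offers no proof of this proposition: it is imported wholesale from \cite[Proposition 5.5]{HKM}, so your argument has to be judged on its own merits rather than against anything in the text. Much of it is sound: the identifications $\mathrm{Hom}_{\mathscr{K}(A)}(T^{\bullet},X)\cong\mathrm{Ker}\,\alpha^{*}\cong\mathrm{Hom}_{A}(M,X)$ (with $M=\mathrm{H}^{0}(T^{\bullet})$) and $\mathrm{Hom}_{\mathscr{K}(A)}(T^{\bullet}[-1],X)\cong\mathrm{Cok}\,\alpha^{*}$, the six-term sequence giving the closure axioms, the truncation-triangle arguments in both directions relating $M\in\mathcal{T}(T^{\bullet})$ to $\mathrm{Hom}(T^{\bullet},T^{\bullet}[1])=0$, and above all the Serre-duality identification of $\mathrm{Hom}_{\mathscr{D}(A)}(T^{\bullet},T^{\bullet}[-1])$ with $D\,\mathrm{Hom}_{\mathscr{D}(A)}(T^{\bullet},\nu M[1])$, hence of negative rigidity with $\nu M\in\mathcal{T}(T^{\bullet})$, are all correct; that last point is genuinely the right mechanism linking stability to the tilting conditions.

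The genuine gap is in (1)$\Rightarrow$(2), at axiom (4) and at $\mathcal{T}(T^{\bullet})=\mathrm{gen}(M)$. You assert $\mathrm{Hom}_{A}(M,X/\mathrm{tr}_{M}(X))=0$, but this is not a formal property of traces: over $A=k[x]/(x^{2})$ with $M=k$ one has $\mathrm{tr}_{M}(A)=\mathrm{soc}(A)$ and $A/\mathrm{tr}_{M}(A)\cong k$, so the group is nonzero. That vanishing holds exactly when $\mathrm{gen}(M)$ is closed under extensions, and you cannot get closure from $\mathrm{gen}(M)\subseteq\mathcal{T}(T^{\bullet})$, since extension-closedness does not pass to subclasses; you would need $\mathrm{gen}(M)=\mathcal{T}(T^{\bullet})$ first. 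But your justification of that equality (``the same identity shows that any $X\in\mathcal{T}$ equals its own trace'') is a bare assertion: the isomorphism $\mathrm{Hom}_{A}(M,-)\cong\mathrm{Hom}_{\mathscr{K}(A)}(T^{\bullet},-)$ says nothing about surjectivity of the evaluation map $M^{(n)}\to X$. So the proof is circular precisely where the tilting hypothesis must do work. Indeed $\mathcal{T}(T^{\bullet})=\mathrm{gen}(\mathrm{H}^{0}(T^{\bullet}))$ is the content of Proposition \ref{Ext-proj-gen}(1) (i.e.\ \cite[Proposition 5.7]{HKM}), and the known proofs use the derived equivalence $F:\mathscr{D}^{\mathrm{b}}(\mathrm{mod}\text{-}B)\overset{\sim}{\to}\mathscr{D}^{\mathrm{b}}(\mathrm{mod}\text{-}A)$, $B\mapsto T^{\bullet}$, essentially: for $X\in\mathcal{T}(T^{\bullet})$ one shows $F^{-1}(X)$ is a $B$-module $N$, applies $F$ to a surjection $B^{(n)}\to N$, and uses that $F$ of any $B$-module has homology only in degrees $-1,0$ to extract an epimorphism $M^{(n)}\to X$; the same truncation mechanism applied to $F^{-1}(X)$ for arbitrary $X$ produces the exact sequence of axiom (4). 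Some argument of this kind has to replace the trace argument, and note that your stability step also rests on $\mathcal{T}(T^{\bullet})=\mathrm{gen}(M)$, so it inherits the gap.

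A second, smaller gap sits in (2)$\Rightarrow$(1): your two-column spectral sequence does show that every $Y^{\bullet}\in\mathscr{K}^{\mathrm{b}}(\mathcal{P}_{A})$ right-orthogonal to $T^{\bullet}$ is acyclic, hence zero, but the inference ``so $T^{\bullet}$ generates $\mathscr{K}^{\mathrm{b}}(\mathcal{P}_{A})$'' is not free. The standard route (compact generation \`a la Neeman--Thomason) needs the right orthogonal to vanish in $\mathscr{D}(\mathrm{Mod}\text{-}A)$, i.e.\ against arbitrary, possibly infinitely generated, modules; and because $\mathrm{Hom}_{\mathscr{K}(A)}(T^{\bullet},-)$ and $\mathrm{Hom}_{\mathscr{K}(A)}(T^{\bullet},-[1])$ commute with filtered colimits, their vanishing on a big module does not descend to its finitely generated submodules, so the finitely generated statement you proved does not formally imply the big one. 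You must either extend the torsion theory to $\mathrm{Mod}\text{-}A$ and rerun the argument there, or construct $A$ directly inside the thick subcategory generated by $T^{\bullet}$; this, too, is where the cited reference does real work rather than bookkeeping.
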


\begin{definition}\label{def_Ext-proj-inj}
Let $\mathcal{C}$ be a full subcategory of $\mathrm{mod}\text{-}A$ closed under extensions. Then $M \in \mathcal{C}$ is said to be Ext-projective (resp., Ext-injective) in $\mathcal{C}$ if $\mathrm{Ext}^{1}_{A}(M, \mathcal{C}) = 0$ (resp., $\mathrm{Ext}^{1}_{A}(\mathcal{C}, M) = 0$). 
\end{definition}

\begin{remark}\label{condition_Ext-proj-inj}
Let $(\mathcal{T}, \mathcal{F})$ be a torsion theory for $\mathrm{mod}\text{-}A$. 
\begin{enumerate}
\item[(1)] For $M \in \mathcal{T}$ which is indecomposable, $M$ is Ext-projective in $\mathcal{T}$ if and only if $\tau M \in \mathcal{F}$. 
\item[(2)] For $N \in \mathcal{F}$ which is indecomposable, $N$ is Ext-injective in $\mathcal{F}$ if and only if $\tau^{-1} N \in \mathcal{F}$. 
\end{enumerate}
\end{remark}

\begin{proposition}[{\cite[Proposition 5.7]{HKM}}]\label{Ext-proj-gen}
Assume that $T^{\bullet}$ is a tilting complex. Then the following hold. 
\begin{enumerate}
\item[(1)] $\mathcal{T}(T^{\bullet})=\mathrm{gen}(\mathrm{H}^{0}(T^{\bullet}))$ and $\mathrm{H}^{0}(T^{\bullet})$ is Ext-projective in $\mathcal{T}(T^{\bullet})$. 
\item[(2)] $\mathcal{F}(T^{\bullet})=\mathrm{cog}(\mathrm{H}^{-1}(\nu T^{\bullet}))$ and $\mathrm{H}^{-1}(\nu T^{\bullet})$ is Ext-injective in $\mathcal{F}(T^{\bullet})$.   
\end{enumerate}
\end{proposition}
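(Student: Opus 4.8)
The plan is to reduce both parts to two elementary computations of $\mathrm{Hom}$ groups in $\mathscr{K}(A)$ and then feed these into the torsion-theoretic structure provided by Proposition \ref{tilting_stable_torsion_theory}. Write $H = \mathrm{H}^{0}(T^{\bullet}) = \mathrm{coker}\,\alpha$ and let $\pi \colon T^{0} \to H$ be the canonical surjection. First I would compute, for a module $X$ regarded as a complex in degree $0$, that a chain map $T^{\bullet} \to X$ is nothing but a map $T^{0} \to X$ annihilating $\alpha$, with no nonzero null-homotopies, so that $\mathrm{Hom}_{\mathscr{K}(A)}(T^{\bullet}, X) \cong \mathrm{Hom}_{A}(H, X)$; hence $\mathcal{F}(T^{\bullet}) = \{ X : \mathrm{Hom}_{A}(H, X) = 0 \}$. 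A parallel computation with $T^{\bullet}[-1]$ shows that $X \in \mathcal{T}(T^{\bullet})$ if and only if every map $T^{-1} \to X$ factors through $\alpha$. These two descriptions do all the work below.

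For part (1) I would first verify $H \in \mathcal{T}(T^{\bullet})$ from the tilting hypothesis in the form $\mathrm{Hom}_{\mathscr{K}(A)}(T^{\bullet}, T^{\bullet}[1]) = 0$: unwinding this vanishing shows that every map $T^{-1} \to T^{0}$ has the form $s\alpha - \alpha t$, and composing with $\pi$ (which kills $\alpha$) shows any map $T^{-1} \to H$ factors through $\alpha$. Since $(\mathcal{T}(T^{\bullet}), \mathcal{F}(T^{\bullet}))$ is a torsion theory, $\mathcal{T}(T^{\bullet})$ is closed under quotients and direct sums, so $H \in \mathcal{T}(T^{\bullet})$ immediately yields $\mathrm{gen}(H) \subseteq \mathcal{T}(T^{\bullet})$. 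Ext-projectivity is then read off the degree-$0$ description: factoring $\alpha = \iota\bar{\alpha}$ through $\mathrm{im}\,\alpha = \ker\pi$ with $\bar{\alpha}$ an epimorphism, the condition $X \in \mathcal{T}(T^{\bullet})$ says precisely that every map $\ker\pi \to X$ extends along $\iota$ to $T^{0}$, which is the vanishing of $\mathrm{Ext}^{1}_{A}(H, X)$.

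The crux of (1) is the reverse inclusion $\mathcal{T}(T^{\bullet}) \subseteq \mathrm{gen}(H)$, and this is where I expect the real work. Given $X \in \mathcal{T}(T^{\bullet})$, set $U = \mathrm{tr}_{H}(X)$, the trace of $H$ in $X$, so that $U \in \mathrm{gen}(H) \subseteq \mathcal{T}(T^{\bullet})$, and aim to show $\mathrm{Hom}_{A}(H, X/U) = 0$, i.e. $X/U \in \mathcal{F}(T^{\bullet})$. For $g \colon H \to X/U$, I would lift $g\pi \colon T^{0} \to X/U$ along $X \twoheadrightarrow X/U$ to some $\psi \colon T^{0} \to X$; then $\psi\alpha$ lands in $U$, and since $U \in \mathcal{T}(T^{\bullet})$ it factors as $\phi\alpha$ with $\phi \colon T^{0} \to U$, so $\psi - \phi$ annihilates $\alpha$ and descends to $\theta \colon H \to X$; a short diagram chase using $\pi\alpha = 0$ and $\mathrm{im}\,\theta \subseteq U$ then forces $g = 0$. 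Thus $X/U \in \mathcal{T}(T^{\bullet}) \cap \mathcal{F}(T^{\bullet}) = 0$, giving $X = U \in \mathrm{gen}(H)$. The delicate point is this diagram chase and the repeated appeal to the factorization property of $\mathcal{T}(T^{\bullet})$.

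Part (2) I would obtain from (1) by duality rather than rerun the argument. Let $(-)^{*} = \mathrm{Hom}_{A}(-, A)$, the duality $\mathscr{K}^{\mathrm{b}}(\mathcal{P}_{A}) \to \mathscr{K}^{\mathrm{b}}(\mathcal{P}_{A^{\mathrm{op}}})$, which preserves tilting complexes, and set $S^{\bullet} = (T^{\bullet})^{*}[1]$, a two-term tilting complex of $A^{\mathrm{op}}$. Since $\nu T^{\bullet} \cong D((T^{\bullet})^{*})$ and $D$ is an exact duality, one gets $\mathrm{H}^{-1}(\nu T^{\bullet}) = D\,\mathrm{H}^{0}(S^{\bullet})$. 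Applying part (1) over $A^{\mathrm{op}}$ gives $\mathcal{T}(S^{\bullet}) = \mathrm{gen}(\mathrm{H}^{0}(S^{\bullet}))$ with $\mathrm{H}^{0}(S^{\bullet})$ Ext-projective, and applying $D$ converts $\mathrm{gen}$ into $\mathrm{cog}$ and Ext-projective into Ext-injective, producing $\mathrm{cog}(\mathrm{H}^{-1}(\nu T^{\bullet}))$ with the desired Ext-injectivity. It then remains to identify $D(\mathcal{T}(S^{\bullet})) = \mathcal{F}(T^{\bullet})$, for which I would use $(T^{\bullet})^{*} = D(\nu T^{\bullet})$ together with the Nakayama (Serre) duality $\mathrm{Hom}_{\mathscr{D}(A)}(T^{\bullet}, X) \cong D\,\mathrm{Hom}_{\mathscr{D}(A)}(X, \nu T^{\bullet})$ to see that $X \in \mathcal{F}(T^{\bullet}) \iff \mathrm{Hom}_{\mathscr{D}(A)}(X, \nu T^{\bullet}) = 0 \iff DX \in \mathcal{T}(S^{\bullet})$. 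Here the only obstacle is bookkeeping: keeping track of the degree shift in $S^{\bullet}$ and matching the two torsion-free classes across the duality.
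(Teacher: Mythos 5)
Your proof is correct, but there is nothing internal to compare it with: the paper offers no argument for this proposition, importing it verbatim from \cite[Proposition 5.7]{HKM}. So what you have written is a self-contained replacement for an external citation, and your route is genuinely different from the source: where \cite{HKM} obtains the statement inside a general theory of torsion theories induced by compact objects, you work directly from the two elementary descriptions available for a two-term complex, namely $\mathrm{Hom}_{\mathscr{K}(A)}(T^{\bullet}, X) \cong \mathrm{Hom}_{A}(\mathrm{H}^{0}(T^{\bullet}), X)$ and the criterion that $X \in \mathcal{T}(T^{\bullet})$ iff every map $T^{-1} \to X$ factors through $\alpha$. You do lean on Proposition \ref{tilting_stable_torsion_theory} for the torsion-theory axioms ($\mathcal{T}(T^{\bullet}) \cap \mathcal{F}(T^{\bullet}) = \{0\}$ and the closure properties), but since that proposition is stated in the paper before Proposition \ref{Ext-proj-gen}, this causes no circularity here. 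The substantive steps all check out: the self-orthogonality computation placing $\mathrm{H}^{0}(T^{\bullet})$ in $\mathcal{T}(T^{\bullet})$; the trace argument, where the lift $\psi$, the correction $\phi$, and the descended map $\theta$ behave exactly as you claim ($\mathrm{Im}\,\theta$ lies in the trace $U$ because $\theta$ is itself a map from $\mathrm{H}^{0}(T^{\bullet})$ to $X$, and $U \in \mathrm{gen}(\mathrm{H}^{0}(T^{\bullet}))$ since finite generation of $X$ lets finitely many images exhaust the trace), so that $X/U \in \mathcal{T}(T^{\bullet}) \cap \mathcal{F}(T^{\bullet}) = \{0\}$; and the dualization in (2), including $\mathrm{H}^{-1}(\nu T^{\bullet}) \cong D\,\mathrm{H}^{0}(S^{\bullet})$ and the Serre-duality identification $\mathcal{F}(T^{\bullet}) = D(\mathcal{T}(S^{\bullet}))$, is bookkept correctly. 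What your approach buys is an elementary proof tailored to the two-term case, making the paper self-contained; what the citation buys is brevity and the greater generality of the HKM framework.

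One local overstatement should be flagged, though it costs you nothing. In the Ext-projectivity step you assert that $X \in \mathcal{T}(T^{\bullet})$ holds ``precisely'' when every map $\mathrm{Ker}\,\pi \to X$ extends along $\iota$ to $T^{0}$, i.e.\ precisely when $\mathrm{Ext}^{1}_{A}(\mathrm{H}^{0}(T^{\bullet}), X) = 0$. Only the forward implication is true in general, and it is the only one you use. The converse fails, for instance, for the two-term tilting complex $T^{\bullet} = A[1]$ (so $T^{-1} = A$, $T^{0} = 0$, $\alpha = 0$): here $\mathrm{H}^{0}(T^{\bullet}) = 0$, so $\mathrm{Ext}^{1}_{A}(\mathrm{H}^{0}(T^{\bullet}), X)$ vanishes for every $X$, while $\mathcal{T}(T^{\bullet}) = \mathrm{Ker}\,\mathrm{Hom}_{A}(A, -) = \{0\}$. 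The clean statement of that step is one-directional: if $X \in \mathcal{T}(T^{\bullet})$ and $u \colon \mathrm{Im}\,\alpha \to X$, then $u\bar{\alpha} \colon T^{-1} \to X$ factors as $h\alpha$ with $h \colon T^{0} \to X$, and since $\bar{\alpha}$ is epic this forces $h\iota = u$; the long exact sequence for $0 \to \mathrm{Im}\,\alpha \to T^{0} \to \mathrm{H}^{0}(T^{\bullet}) \to 0$ then gives $\mathrm{Ext}^{1}_{A}(\mathrm{H}^{0}(T^{\bullet}), X) = 0$, which is all that Ext-projectivity requires.
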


\begin{theorem}[{\cite[Theorem 5.8]{HKM}}]\label{const_two_term_tilting}
Let $(\mathcal{T}, \mathcal{F})$ be a stable torsion theory for $\mathrm{mod}\text{-}A$. Assume that there exist $X \in \mathcal{T}$ and $Y \in \mathcal{F}$ satisfying the following conditions: 
\begin{enumerate}
\item[(1)] $\mathcal{T}=\mathrm{gen}(X)$ and $X$ is Ext-projective in $\mathcal{T}$; and 
\item[(2)] $\mathcal{F}=\mathrm{cog}(Y)$ and $Y$ is Ext-injective in $\mathcal{F}$. 
\end{enumerate}
Let $P^{\bullet}_{X}$ be a minimal projective presentation of $X$ and $I^{\bullet}_{Y}$ be a minimal injective presentation of $Y$, and set $T^{\bullet}_{X,Y} = P^{\bullet}_{X} \oplus \nu^{-1}I^{\bullet}_{Y}[1]$. Then $T^{\bullet}_{X,Y} \in \mathscr{K}^{\mathrm{b}}(\mathcal{P}_{A})$ is a tilting complex such that $\mathcal{T}=\mathcal{T}(T^{\bullet}_{X,Y})$ and $\mathcal{F}=\mathcal{F}(T^{\bullet}_{X,Y})$. 
\end{theorem}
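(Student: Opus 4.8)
The plan is to reduce everything to the two equalities $\mathcal{T}(T^{\bullet}_{X,Y}) = \mathcal{T}$ and $\mathcal{F}(T^{\bullet}_{X,Y}) = \mathcal{F}$. Once these are in hand, the pair $(\mathcal{T}(T^{\bullet}_{X,Y}), \mathcal{F}(T^{\bullet}_{X,Y}))$ coincides with the given stable torsion theory $(\mathcal{T}, \mathcal{F})$, and Proposition \ref{tilting_stable_torsion_theory} yields at once that $T^{\bullet}_{X,Y}$ is a tilting complex. That $T^{\bullet}_{X,Y} \in \mathscr{K}^{\mathrm{b}}(\mathcal{P}_{A})$ is a two-term complex is immediate: $P^{\bullet}_{X}$ is a two-term complex of projectives, and since $\nu^{-1}$ carries injectives to projectives, $\nu^{-1}I^{\bullet}_{Y}[1]$ is again a two-term complex of projectives concentrated in degrees $-1$ and $0$.

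The heart of the argument is a set of four Hom-computations. Write $P^{\bullet}_{X} : P^{-1} \overset{\alpha}{\to} P^{0}$ and $I^{\bullet}_{Y} : I^{0} \overset{\beta}{\to} I^{1}$. Since $\mathrm{Hom}_{\mathscr{K}(A)}(-,Z)$ and $\mathrm{Hom}_{\mathscr{K}(A)}(-,Z[1])$ turn the direct sum $T^{\bullet}_{X,Y} = P^{\bullet}_{X} \oplus \nu^{-1}I^{\bullet}_{Y}[1]$ into direct sums, it suffices to treat each summand. For the projective part, the fact that $\mathrm{Coker}\,\alpha = X$ gives $\mathrm{Hom}_{\mathscr{K}(A)}(P^{\bullet}_{X}, Z) \cong \mathrm{Hom}_{A}(X, Z)$, while the defining formula $\tau X = \mathrm{Ker}\,\nu(\alpha)$ together with the duality $\mathrm{Hom}_{A}(P, Z) \cong D\mathrm{Hom}_{A}(Z, \nu P)$ for projective $P$ gives $\mathrm{Hom}_{\mathscr{K}(A)}(P^{\bullet}_{X}, Z[1]) \cong D\mathrm{Hom}_{A}(Z, \tau X)$. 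For the shifted injective part I would use the dual duality $\mathrm{Hom}_{A}(\nu^{-1}I, Z) \cong D\mathrm{Hom}_{A}(Z, I)$ for injective $I$: since $\nu^{-1}I^{0} \overset{\nu^{-1}\beta}{\to} \nu^{-1}I^{1}$ is a projective presentation of $\tau^{-1}Y = \mathrm{Coker}\,\nu^{-1}(\beta)$, one obtains $\mathrm{Hom}_{\mathscr{K}(A)}(\nu^{-1}I^{\bullet}_{Y}[1], Z) \cong \mathrm{Hom}_{A}(\tau^{-1}Y, Z)$ and, after a shift, $\mathrm{Hom}_{\mathscr{K}(A)}(\nu^{-1}I^{\bullet}_{Y}[1], Z[1]) \cong D\mathrm{Hom}_{A}(Z, Y)$.

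Combining these, a module $Z$ lies in $\mathcal{F}(T^{\bullet}_{X,Y})$ precisely when $\mathrm{Hom}_{A}(X, Z) = 0$ and $\mathrm{Hom}_{A}(\tau^{-1}Y, Z) = 0$, and it lies in $\mathcal{T}(T^{\bullet}_{X,Y})$ precisely when $\mathrm{Hom}_{A}(Z, Y) = 0$ and $\mathrm{Hom}_{A}(Z, \tau X) = 0$. On the other hand, the defining orthogonality of a torsion theory, together with $\mathcal{T} = \mathrm{gen}(X)$ and $\mathcal{F} = \mathrm{cog}(Y)$, gives $\mathcal{F} = \{Z : \mathrm{Hom}_{A}(X, Z) = 0\}$ and $\mathcal{T} = \{Z : \mathrm{Hom}_{A}(Z, Y) = 0\}$. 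Hence the inclusions $\mathcal{F}(T^{\bullet}_{X,Y}) \subseteq \mathcal{F}$ and $\mathcal{T}(T^{\bullet}_{X,Y}) \subseteq \mathcal{T}$ are automatic. For the reverse inclusions I would invoke the Ext-projectivity and Ext-injectivity hypotheses through Remark \ref{condition_Ext-proj-inj}: because $X$ is Ext-projective in $\mathcal{T}$ we have $\tau X \in \mathcal{F}$, so $\mathrm{Hom}_{A}(Z, \tau X) = 0$ for every $Z \in \mathcal{T}$; dually, because $Y$ is Ext-injective in $\mathcal{F}$ we have $\tau^{-1}Y \in \mathcal{T}$, so $\mathrm{Hom}_{A}(\tau^{-1}Y, Z) = 0$ for every $Z \in \mathcal{F}$. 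This upgrades the two inclusions to the desired equalities.

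The main obstacle is the second summand: converting the homotopy-category Hom-groups involving $\nu^{-1}I^{\bullet}_{Y}[1]$ into honest module Hom-groups, which is exactly where the Nakayama duality and the identification of $\tau^{-1}Y$ as the cokernel of $\nu^{-1}(\beta)$ are used, and where one must track the shifts $[1]$ and $[-1]$ carefully. By contrast, once the four Hom-computations are settled, the orthogonality bookkeeping and the final appeal to Proposition \ref{tilting_stable_torsion_theory} are routine. It is worth emphasizing that the precise consequence of Remark \ref{condition_Ext-proj-inj} needed here is $\tau^{-1}Y \in \mathcal{T}$ (the statement dual to part (1)), since this is exactly what forces $\mathcal{F} \subseteq \mathcal{F}(T^{\bullet}_{X,Y})$.
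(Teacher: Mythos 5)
A preliminary caveat: the paper itself gives no proof of this statement --- it is imported verbatim from \cite{HKM} as Theorem 5.8 --- so there is no internal proof to compare yours against; I can only judge your argument on its own terms. On those terms it is correct, and it is essentially the natural (and, in spirit, the original Hoshino--Kato--Miyachi) argument: reduce everything to the identities $\mathcal{T}(T^{\bullet}_{X,Y})=\mathcal{T}$ and $\mathcal{F}(T^{\bullet}_{X,Y})=\mathcal{F}$ and then quote Proposition \ref{tilting_stable_torsion_theory} in the direction (2) $\Rightarrow$ (1). Your four Hom-computations are all right: for a module $Z$, additivity splits the computation over the two summands, $\mathrm{Hom}_{\mathscr{K}(A)}(P^{\bullet}_{X},Z)\cong\mathrm{Hom}_{A}(X,Z)$ and $\mathrm{Hom}_{\mathscr{K}(A)}(\nu^{-1}I^{\bullet}_{Y}[1],Z)\cong\mathrm{Hom}_{A}(\tau^{-1}Y,Z)$ are the cokernel descriptions of chain maps modulo homotopy, while the two shifted computations $\mathrm{Hom}_{\mathscr{K}(A)}(P^{\bullet}_{X},Z[1])\cong D\mathrm{Hom}_{A}(Z,\tau X)$ and $\mathrm{Hom}_{\mathscr{K}(A)}(\nu^{-1}I^{\bullet}_{Y}[1],Z[1])\cong D\mathrm{Hom}_{A}(Z,Y)$ follow from the projective--injective duality $\mathrm{Hom}_{A}(P,Z)\cong D\mathrm{Hom}_{A}(Z,\nu P)$ and left-exactness of $\mathrm{Hom}_{A}(Z,-)$; minimality of the presentations is used exactly where it must be, namely in identifying $\mathrm{Ker}\,\nu(\alpha)$ with $\tau X$ and $\mathrm{Cok}\,\nu^{-1}(\beta)$ with $\tau^{-1}Y$. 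The orthogonality bookkeeping ($\mathcal{F}=\mathcal{T}^{\perp}$, $\mathcal{T}={}^{\perp}\mathcal{F}$, combined with $\mathcal{T}=\mathrm{gen}(X)$, $\mathcal{F}=\mathrm{cog}(Y)$) and the use of Ext-projectivity/injectivity to force $\tau X\in\mathcal{F}$ and $\tau^{-1}Y\in\mathcal{T}$ then close the argument, with no circularity since Proposition \ref{tilting_stable_torsion_theory} is independent of the statement being proved.

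Two small points deserve explicit mention. First, Remark \ref{condition_Ext-proj-inj}(2) as printed in the paper reads ``$\tau^{-1}N\in\mathcal{F}$,'' which is evidently a typo for ``$\tau^{-1}N\in\mathcal{T}$''; you correctly invoke the statement dual to part (1), which is exactly what the inclusion $\mathcal{F}\subseteq\mathcal{F}(T^{\bullet}_{X,Y})$ requires, so no harm is done. Second, Remark \ref{condition_Ext-proj-inj} is stated only for \emph{indecomposable} modules, whereas $X$ and $Y$ need not be indecomposable; to be complete you should apply it summand-wise (Ext-projectivity and Ext-injectivity pass to direct summands, $\tau$ annihilates projective summands and $\tau^{-1}$ annihilates injective ones) and use that $\mathcal{T}$ and $\mathcal{F}$ are closed under finite direct sums, by Remark \ref{rem_torsion_theory}(1). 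Both points are cosmetic and easily repaired; the proof stands.
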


\section{Tilting modules arising from two-term tilting complexes}

For $X \in \mathrm{mod}\text{-}A$, we use the notation $\mathrm{gen}(X_{A})$ (resp., $\mathrm{cog}(X_{A}), \mathrm{add}(X_{A}))$ to stress that it is considered as a subcategory of $\mathrm{mod}\text{-}A$. We denote by $\mathrm{ann}_{A}(X)$ the annihilator of $X$. 

\begin{lemma}\label{partial_tilting}
Assume that $X \in \mathrm{mod}\text{-}A$ is Ext-projective in $\mathrm{gen}(X_{A})$, and set $\mathfrak{a}=\mathrm{ann}_{A}(X)$. Then the following hold. 
\begin{enumerate}
\item[(1)] $\mathrm{proj\ dim}\ X_{A/\mathfrak{a}} \le 1$. 
\item[(2)] $\mathrm{Ext}^{1}_{A/\mathfrak{a}}(X,X)=0$. 
\item[(3)] There exists an exact sequence $0 \to A/\mathfrak{a} \to X^{0} \to X^{1} \to 0$ in $\mathrm{mod}\text{-}A/\mathfrak{a}$ such that $X^{0} \in \mathrm{add}(X_{A/\mathfrak{a}})$ and $X^{1} \in \mathrm{gen}(X_{A/\mathfrak{a}})$ which is Ext-projective in $\mathrm{gen}(X_{A/\mathfrak{a}})$.  
\end{enumerate}
\end{lemma}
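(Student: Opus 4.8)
The three assertions are precisely the defining properties of a tilting module over $\bar{A} := A/\mathfrak{a}$, so the plan is to verify them one at a time, the central device being a comparison of extension groups over $A$ and over $\bar{A}$ together with the faithfulness of $X$ as an $\bar{A}$-module. The key preliminary observation is that, since $A \to \bar{A}$ is surjective, every $A$-linear map between $\bar{A}$-modules is automatically $\bar{A}$-linear; hence for $\bar{A}$-modules $M,N$ the canonical map $\mathrm{Ext}^{1}_{\bar{A}}(M,N) \to \mathrm{Ext}^{1}_{A}(M,N)$ is injective, because a $\bar{A}$-extension whose underlying $A$-sequence splits already splits over $\bar{A}$. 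Since restriction of scalars carries $\mathrm{gen}(X_{\bar{A}})$ into $\mathrm{gen}(X_{A})$, the hypothesis that $X$ is Ext-projective in $\mathrm{gen}(X_{A})$ gives $\mathrm{Ext}^{1}_{\bar{A}}(X,Y)=0$ for every $Y \in \mathrm{gen}(X_{\bar{A}})$. Taking $Y=X$ yields (2) immediately, and it also shows $X$ is Ext-projective in $\mathrm{gen}(X_{\bar{A}})$; a routine lifting argument then shows $\mathrm{gen}(X_{\bar{A}})$ is closed under extensions, so it is a torsion class in $\mathrm{mod}\text{-}\bar{A}$.

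For (3) I would exploit that $\mathfrak{a}=\mathrm{ann}_{A}(X)$ makes $X$ a faithful $\bar{A}$-module, so the intersection of the kernels of all $\bar{A}$-maps $\bar{A}\to X$ is zero and therefore $\bar{A}\in\mathrm{cog}(X_{\bar{A}})$. Choose a left $\mathrm{add}(X_{\bar{A}})$-approximation $\iota:\bar{A}\to X^{0}$; since a monomorphism $\bar{A}\hookrightarrow X^{(n)}$ factors through $\iota$, the map $\iota$ is itself a monomorphism. Put $X^{1}=\mathrm{Coker}\,\iota$, so that $0\to\bar{A}\xrightarrow{\iota}X^{0}\to X^{1}\to 0$ is exact with $X^{0}\in\mathrm{add}(X_{\bar{A}})$ and $X^{1}\in\mathrm{gen}(X_{\bar{A}})$. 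Applying $\mathrm{Hom}_{\bar{A}}(-,Y)$ for $Y\in\mathrm{gen}(X_{\bar{A}})$ and using $\mathrm{Ext}^{1}_{\bar{A}}(X^{0},Y)=0$ gives $\mathrm{Ext}^{1}_{\bar{A}}(X^{1},Y)\cong\mathrm{Coker}\,\mathrm{Hom}_{\bar{A}}(\iota,Y)$. This cokernel vanishes: every map $\bar{A}\to Y$ lifts, by projectivity of $\bar{A}$, through a chosen epimorphism $X^{(m)}\twoheadrightarrow Y$ and the resulting map $\bar{A}\to X^{(m)}$ factors through the approximation $\iota$. Hence $X^{1}$ is Ext-projective in $\mathrm{gen}(X_{\bar{A}})$, which establishes (3).

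The main obstacle is (1), since Ext-projectivity of $X$ only controls $\mathrm{Ext}^{1}_{\bar{A}}(X,-)$ on the torsion class and does not by itself annihilate $\mathrm{Ext}^{2}$. My plan is to take a minimal projective presentation $\bar{P}^{-1}\xrightarrow{g}\bar{P}^{0}\to X\to 0$ over $\bar{A}$ and to prove $\mathrm{proj\ dim}\,X_{\bar{A}}\le 1$ by showing $\mathrm{Ker}\,g=0$. Applying the left-exact functor $\nu^{-1}_{\bar{A}}=\mathrm{Hom}_{\bar{A}}(D\bar{A},-)$ to $0\to\tau_{\bar{A}}X\to\nu_{\bar{A}}\bar{P}^{-1}\xrightarrow{\nu g}\nu_{\bar{A}}\bar{P}^{0}$ and using $\nu^{-1}\nu\cong\mathrm{id}$ on projectives identifies $\mathrm{Ker}\,g\cong\nu^{-1}_{\bar{A}}(\tau_{\bar{A}}X)$. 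Since $X$ is Ext-projective in the torsion class $\mathrm{gen}(X_{\bar{A}})$, Remark \ref{condition_Ext-proj-inj}(1) places $\tau_{\bar{A}}X$ in the torsion-free class, while faithfulness gives that the submodule $\mathrm{Ker}\,g\subseteq\bar{P}^{-1}$ lies in $\mathrm{cog}(X_{\bar{A}})$; the goal is to force these two constraints to collapse $\mathrm{Ker}\,g$ to zero, which I expect will require showing that the induced torsion theory on $\bar{A}$ is again stable. Reconciling the torsion-free and cogeneration conditions on $\mathrm{Ker}\,g$ is the delicate point; as a fallback I would argue more computationally, identifying the $\bar{A}$-syzygy of $X$ with $\mathrm{Im}\,f/P^{0}\mathfrak{a}$ arising from a minimal projective presentation $P^{-1}\xrightarrow{f}P^{0}\to X\to 0$ over $A$ and verifying directly that this quotient is $\bar{A}$-projective.
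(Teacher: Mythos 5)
Your treatments of (2) and (3) are correct and essentially reproduce the paper's own argument: your left $\mathrm{add}(X_{A/\mathfrak{a}})$-approximation $\iota$ is exactly the paper's monomorphism $f\colon A/\mathfrak{a}\to X^{(d)}$ assembled from generators of $\mathrm{Hom}_{A/\mathfrak{a}}(A/\mathfrak{a},X)$ as a left $\mathrm{End}_{A/\mathfrak{a}}(X)$-module (monic by faithfulness), and your lifting argument that $\mathrm{Hom}_{A/\mathfrak{a}}(\iota,Y)$ is surjective for every $Y\in\mathrm{gen}(X_{A/\mathfrak{a}})$ is the paper's commutative-diagram chase. The genuine gap is in (1), which you leave open, and the route you sketch for closing it cannot be completed as stated.

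Your reduction of (1) is correct: for a minimal projective presentation $Q^{-1}\overset{g}{\to}Q^{0}\to X\to 0$ over $A/\mathfrak{a}$ one has $\mathrm{Ker}\,g\cong\nu^{-1}(\tau X)=\mathrm{Hom}_{A/\mathfrak{a}}(D(A/\mathfrak{a}),\tau X)$. But the two constraints you propose to play against each other, namely $\mathrm{Ker}\,g\in\mathrm{cog}(X_{A/\mathfrak{a}})$ and (granting stability, which you also do not prove) $\mathrm{Ker}\,g\in\mathrm{Ker}\,\mathrm{Hom}_{A/\mathfrak{a}}(X,-)$, are jointly satisfied by nonzero modules, so no argument of this shape can force $\mathrm{Ker}\,g=0$. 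For instance, over the path algebra of the quiver $1\to 2$, take $X=P_{1}$, the projective with top $S_{1}$ and socle $S_{2}$: then $X$ is faithful and Ext-projective in $\mathrm{gen}(X)$ and the induced torsion theory is stable, yet $S_{2}$ is nonzero, lies in $\mathrm{cog}(X)$, and satisfies $\mathrm{Hom}(X,S_{2})=0$. What is missing is the paper's key observation, which applies faithfulness on the dual side: since $DX$ is a faithful left $A/\mathfrak{a}$-module, we get $D(A/\mathfrak{a})\in\mathrm{gen}(X_{A/\mathfrak{a}})$; hence for each indecomposable nonprojective summand $Z$ of $X$, every homomorphism $D(A/\mathfrak{a})\to\tau Z$ vanishes, because composing with an epimorphism $X^{(m)}\twoheadrightarrow D(A/\mathfrak{a})$ lands in $\mathrm{Hom}_{A/\mathfrak{a}}(X^{(m)},\tau Z)=0$, using $\tau Z\in\mathrm{Ker}\,\mathrm{Hom}_{A/\mathfrak{a}}(X,-)$ from Remark \ref{condition_Ext-proj-inj}(1). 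Thus $\nu^{-1}(\tau Z)=\mathrm{Hom}_{A/\mathfrak{a}}(D(A/\mathfrak{a}),\tau Z)=0$ outright, with no appeal to stability and no inspection of $\mathrm{Ker}\,g$ as a submodule of $Q^{-1}$; applying $\nu^{-1}$ to a minimal injective presentation $0\to\tau Z\to I^{0}\to I^{1}$ then yields an exact sequence $0\to\nu^{-1}I^{0}\to\nu^{-1}I^{1}\to Z\to 0$ with $\nu^{-1}I^{0},\nu^{-1}I^{1}$ projective, whence $\mathrm{proj\ dim}\ Z_{A/\mathfrak{a}}\le 1$. Your computational fallback (verifying ``directly'' that the syzygy $\mathrm{Im}\,f/P^{0}\mathfrak{a}$ is projective) is not an argument: that projectivity is precisely the content of (1).
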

\begin{proof}
Note first that the canonical full embedding $\mathrm{mod}\text{-}A/\mathfrak{a} \hookrightarrow \mathrm{mod}\text{-}A$ induces $\mathrm{gen}(X_{A/\mathfrak{a}}) = \mathrm{gen}(X_{A})$. \\
\indent
(1) Since $X_{A/\mathfrak{a}}$ is Ext-projective in $\mathrm{gen}(X_{A/\mathfrak{a}})$ by assumption, the pair $(\mathrm{gen}(X_{A/\mathfrak{a}}), \mathrm{Ker}\ \mathrm{Hom}_{A/\mathfrak{a}}(X,-))$ is a torsion theory for $\mathrm{mod}\text{-}A/\mathfrak{a}$. Since $DX$ is faithful as a left $A/\mathfrak{a}$-module, we have $D(A/\mathfrak{a}) \in \mathrm{gen}(X_{A/\mathfrak{a}})$. Let $Z$ be an indecomposable direct summand of $X_{A/\mathfrak{a}}$. We may assume that $Z$ is not projective in $\mathrm{mod}\text{-}A/\mathfrak{a}$. Since $Z$ is Ext-projective in $\mathrm{gen}(X_{A/\mathfrak{a}})$, we have ${\tau}Z_{A/\mathfrak{a}} \in \mathrm{Ker}\ \mathrm{Hom}_{A/\mathfrak{a}}(X,-)$. Let $0 \to \tau Z \to I^{0} \to I^{1}$ be a minimal injective presentation in $\mathrm{mod}\text{-}A/\mathfrak{a}$. Then we have an exact sequence 
\[
0 \to \nu^{-1}(\tau Z) \to \nu^{-1}I^{0} \to \nu^{-1}I^{1} \to Z \to 0. 
\] 
Since $\nu^{-1}I^{0}, \nu^{-1}I^{1} \in \mathcal{P}_{A/\mathfrak{a}}$ and $\nu^{-1}(\tau Z) = \mathrm{Hom}_{A/\mathfrak{a}}(D(A/\mathfrak{a}), \tau Z) =0$, the above exact sequence gives a minimal projective resolution of $Z_{A/\mathfrak{a}}$. Thus, we have $\mathrm{proj\ dim}\ X_{A/\mathfrak{a}} \le 1$. \\
\indent 
(2) It follows by the assumption that $X_{A/\mathfrak{a}}$ is Ext-projective in $\mathrm{gen}(X_{A/\mathfrak{a}})$. The assertion follows. \\
\indent
(3) Since $X$ is faithful as a right $A/\mathfrak{a}$-module, there exist generators
\[
f_{1}, \cdots, f_{d} \in \mathrm{Hom}_{A/\mathfrak{a}}(A/\mathfrak{a}, X)
\]
as a left $\mathrm{End}_{A/\mathfrak{a}}(X)$-module such that 
\[
f=\left[ \begin{array}{c} f_{1} \\ \vdots \\ f_{d} \end{array} \right] : A/\mathfrak{a} \to X^{(d)}, a \mapsto \left[ \begin{array}{c} f_{1}(a) \\ \vdots \\ f_{d}(a) \end{array} \right] 
\]
is monic. We show that $\mathrm{Cok}\ f$ is Ext-projective in $\mathrm{gen}(X_{A/\mathfrak{a}})$. Let $N \in \mathrm{gen}(X_{A/\mathfrak{a}})$. Then there exists an epimorphism $\varepsilon: X^{(n)} \to N$, and we have a commutative diagram 
\[
\begin{CD}
\mathrm{Hom}_{A/\mathfrak{a}}(X^{(d)}, X^{(n)}) @>{\mathrm{Hom}_{A/\mathfrak{a}}(X^{(d)}, \varepsilon)}>> \mathrm{Hom}_{A/\mathfrak{a}}(X^{(d)}, N) \\
@V{\mathrm{Hom}_{A/\mathfrak{a}}(f, X)}VV  @VV{\mathrm{Hom}_{A/\mathfrak{a}}(f, N)}V   \\
\mathrm{Hom}_{A/\mathfrak{a}}(A/\mathfrak{a}, X^{(n)}) @>>{\mathrm{Hom}_{A/\mathfrak{a}}(A/\mathfrak{a}, \varepsilon)}> \mathrm{Hom}_{A/\mathfrak{a}}(A/\mathfrak{a}, N).             
\end{CD}
\]
Since  $\mathrm{Hom}_{A/\mathfrak{a}}(A/\mathfrak{a}, \varepsilon)$ is epic and $\mathrm{Hom}_{A/\mathfrak{a}}(f, X)$ is also epic by the construction, we have ${\mathrm{Hom}_{A/\mathfrak{a}}(f, N)}$ is epic and hence  $\mathrm{Ext}^{1}_{A/\mathfrak{a}}(\mathrm{Cok}\ f, N)=0$. Thus, $\mathrm{Cok}\ f$ is Ext-projective in $\mathrm{gen}(X_{A/\mathfrak{a}})$. 
\end{proof}

\begin{lemma}\label{partial_cotilting}
Assume that $Y \in \mathrm{mod}\text{-}A$ is Ext-injective in $\mathrm{cog}(Y_{A})$, and set $\mathfrak{a}^{\prime}=\mathrm{ann}_{A}(Y)$. Then the following hold. 
\begin{enumerate}
\item[(1)] $\mathrm{inj\ dim}\ Y_{A/\mathfrak{a}^{\prime}} \le 1$. 
\item[(2)] $\mathrm{Ext}^{1}_{A/\mathfrak{a}^{\prime}}(Y,Y)=0$. 
\item[(3)] There exists an exact sequence $0 \to Y^{1} \to Y^{0} \to A/\mathfrak{a}^{\prime} \to 0$ in $\mathrm{mod}\text{-}A/\mathfrak{a}^{\prime}$ such that $Y^{0} \in \mathrm{add}(Y_{A/\mathfrak{a}^{\prime}})$ and $Y^{1} \in \mathrm{cog}(Y_{A/\mathfrak{a}^{\prime}})$ which is Ext-injective in $\mathrm{cog}(Y_{A/\mathfrak{a}^{\prime}})$.  
\end{enumerate}
\end{lemma}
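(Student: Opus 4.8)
The plan is to derive all three statements from Lemma \ref{partial_tilting} by duality, so that the hypothesis ``$Y$ Ext-injective in $\mathrm{cog}(Y_A)$'' plays the role that ``$X$ Ext-projective in $\mathrm{gen}(X_A)$'' played there. Exactly as in the proof of that lemma, the canonical full embedding $\mathrm{mod}\text{-}A/\mathfrak{a}' \hookrightarrow \mathrm{mod}\text{-}A$ gives $\mathrm{cog}(Y_{A/\mathfrak{a}'}) = \mathrm{cog}(Y_A)$, so I work throughout over $S := A/\mathfrak{a}'$, over which $Y$ is by definition faithful. Applying the duality $D$, set $X := DY$; this is a right $A^{\mathrm{op}}$-module, Ext-projective in $\mathrm{gen}(X_{A^{\mathrm{op}}})$, and a direct computation gives $\mathrm{ann}_{A^{\mathrm{op}}}(X) = \mathfrak{a}'$ (a two-sided ideal, with $A^{\mathrm{op}}/\mathfrak{a}' = S^{\mathrm{op}}$). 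Thus Lemma \ref{partial_tilting} is available for $X$ over $S^{\mathrm{op}}$.

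Parts (1) and (2) then follow at once. Lemma \ref{partial_tilting}(1)--(2) give $\mathrm{proj\ dim}\ X_{S^{\mathrm{op}}} \le 1$ and $\mathrm{Ext}^{1}_{S^{\mathrm{op}}}(X, X) = 0$; since $D$ is an exact contravariant equivalence carrying projective resolutions to injective coresolutions and preserving $\mathrm{Ext}^{1}$, these translate to $\mathrm{inj\ dim}\ Y_S \le 1$ and $\mathrm{Ext}^{1}_S(Y, Y) = 0$. Alternatively, one can argue (1) directly as the $\tau^{-1}$/$\nu$-dual of the argument for Lemma \ref{partial_tilting}(1), using Remark \ref{condition_Ext-proj-inj}(2) to put $\tau^{-1} Z \in \mathrm{cog}(Y_S)$ for each non-injective indecomposable summand $Z$ of $Y$, together with the vanishing $\mathrm{Hom}_S(\mathcal{T}, \mathcal{F}) = 0$ for the torsion theory $(\mathrm{Ker}\ \mathrm{Hom}_S(-, Y),\ \mathrm{cog}(Y_S))$.

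For (3) I mirror the explicit construction in the proof of Lemma \ref{partial_tilting}(3), now on the side of $\mathrm{Hom}$-out rather than $\mathrm{Hom}$-in. I regard $\mathrm{Hom}_S(Y, S)$ as a right $\mathrm{End}_S(Y)$-module via precomposition, choose a generating set $g_1, \dots, g_d$, and assemble
\[
g = [\,g_1, \dots, g_d\,] : Y^{(d)} \to S, \qquad (y_1, \dots, y_d) \mapsto \textstyle\sum_{i} g_i(y_i).
\]
Setting $Y^0 = Y^{(d)} \in \mathrm{add}(Y_S)$ and $Y^1 = \mathrm{Ker}\ g$, the choice of generators makes $\mathrm{Hom}_S(Y, g)$ epic, i.e.\ $g$ is a right $\mathrm{add}(Y)$-approximation. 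To see that $Y^1$ is Ext-injective in $\mathrm{cog}(Y_S)$, take $N \in \mathrm{cog}(Y_S)$ with a monomorphism $\iota : N \hookrightarrow Y^{(n)}$ and consider
\[
\begin{CD}
\mathrm{Hom}_{S}(Y^{(n)}, Y^{(d)}) @>{\mathrm{Hom}_{S}(\iota, Y^{(d)})}>> \mathrm{Hom}_{S}(N, Y^{(d)}) \\
@V{\mathrm{Hom}_{S}(Y^{(n)}, g)}VV @VV{\mathrm{Hom}_{S}(N, g)}V \\
\mathrm{Hom}_{S}(Y^{(n)}, S) @>>{\mathrm{Hom}_{S}(\iota, S)}> \mathrm{Hom}_{S}(N, S).
\end{CD}
\]
The left vertical map is epic by construction, so once $\mathrm{Hom}_S(\iota, S)$ is epic the right vertical map $\mathrm{Hom}_S(N, g)$ is epic; then the long exact sequence of $0 \to Y^1 \to Y^{(d)} \xrightarrow{g} S \to 0$ together with $\mathrm{Ext}^{1}_S(N, Y^{(d)}) = 0$ (Ext-injectivity of $Y$) forces $\mathrm{Ext}^{1}_S(N, Y^1) = 0$.

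The step I expect to be the main obstacle is exactly the surjectivity underlying (3): that $g$ is an epimorphism, i.e.\ $A/\mathfrak{a}' \in \mathrm{gen}(Y_S)$ (equivalently $\mathrm{tr}_S(Y) = S$), and that $\mathrm{Hom}_S(\iota, S)$ is epic for cogeneration monomorphisms $\iota$. In Lemma \ref{partial_tilting}(3) the two analogous points were automatic: faithfulness of $X$ yielded the monomorphism $A/\mathfrak{a} \hookrightarrow X^{(d)}$, and the bottom map $\mathrm{Hom}(A/\mathfrak{a}, \varepsilon)$ was epic because $A/\mathfrak{a}$ is projective. Here the dual inputs --- projectivity of $A/\mathfrak{a}$ replaced by injectivity of $A/\mathfrak{a}'$, and cogeneration replaced by generation --- are not furnished by faithfulness together with Ext-injectivity alone, so the crux is to extract $S \in \mathrm{gen}(Y_S)$ and the relative injectivity of $S$ from the hypothesis that $Y$ Ext-injectively cogenerates its own torsion-free class; this is where I would concentrate the remaining work.
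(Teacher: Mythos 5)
Your handling of (1) and (2) coincides with the paper's proof, which consists entirely of the duality reduction you set up: $D(\mathrm{cog}(Y)) \cong \mathrm{gen}(DY)$ makes $DY$ Ext-projective in $\mathrm{gen}(DY)$ over $A^{\mathrm{op}}$, one checks $\mathrm{ann}_{A^{\mathrm{op}}}(DY)=\mathfrak{a}^{\prime}$, and all three assertions are then obtained by applying Lemma \ref{partial_tilting} and dualizing back. The genuine gap is in your part (3), and the obstacle you flag at the end is not a step awaiting more work --- it is false in general. Your construction requires $A/\mathfrak{a}^{\prime} \in \mathrm{gen}(Y_{A/\mathfrak{a}^{\prime}})$, but take $A$ to be the path algebra of $1 \to 2$ and $Y = DA$: then $Y$ is Ext-injective in $\mathrm{cog}(Y)=\mathrm{mod}\text{-}A$ and $\mathfrak{a}^{\prime}=0$, yet every module in $\mathrm{add}(DA)$ has top a direct sum of copies of the simple injective module, so the simple projective summand of $A$ is not an epimorphic image of any $Y^{0} \in \mathrm{add}(Y)$. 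Likewise the surjectivity of $\mathrm{Hom}_{S}(\iota,S)$ that your diagram needs has no reason to hold, since $S=A/\mathfrak{a}^{\prime}$ is not injective.

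What saves the lemma is that the duality, pushed through part (3) as well, does not produce the printed sequence: applying $D$ to the sequence $0 \to A^{\mathrm{op}}/\mathfrak{a}^{\prime} \to X^{0} \to X^{1} \to 0$ of Lemma \ref{partial_tilting}(3) for $X=DY$ gives
\[
0 \to DX^{1} \to DX^{0} \to D(A/\mathfrak{a}^{\prime}) \to 0,
\]
because $D$ carries the regular left module to the injective cogenerator $D(A/\mathfrak{a}^{\prime})$, not to $A/\mathfrak{a}^{\prime}$. So the cokernel term ``$A/\mathfrak{a}^{\prime}$'' in the statement of (3) must be read as $D(A/\mathfrak{a}^{\prime})$ (a misprint): that is the version the paper's one-line proof actually establishes, and the version the cotilting claim of Theorem \ref{tilting_module}(2) requires, since the third tilting condition for $D(\mathrm{H}^{-1}({\nu}T^{\bullet}))$ over $(A/\mathfrak{a}^{\prime})^{\mathrm{op}}$ dualizes exactly to a copresentation of the injective cogenerator. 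With target $D(A/\mathfrak{a}^{\prime})$ both of your sticking points evaporate: the epimorphism $Y^{(d)} \to D(A/\mathfrak{a}^{\prime})$ is the dual of the monomorphism $f$ built from faithfulness of $DY$, and injectivity of $D(A/\mathfrak{a}^{\prime})$ plays the role that projectivity of $A/\mathfrak{a}$ played in Lemma \ref{partial_tilting}(3). So the correct repair is not to complete your direct construction but to discard it and apply $D$ to Lemma \ref{partial_tilting}(3) wholesale, exactly as you did for (1) and (2). (A side remark: in your alternative direct argument for (1), Ext-injectivity of an indecomposable summand $Z$ places $\tau^{-1}Z$ in the torsion class $\mathrm{Ker}\,\mathrm{Hom}_{S}(-,Y)$, not in $\mathrm{cog}(Y_{S})$; your phrasing mirrors a misprint in Remark \ref{condition_Ext-proj-inj}(2).)
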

\begin{proof}
There exists an equivalence $D(\mathrm{cog}(Y)) \cong \mathrm{gen}(DY)$ as subcategories in $\mathrm{mod}\text{-}A^{\mathrm{op}}$, and hence $DY \in \mathrm{mod}\text{-}A^{\mathrm{op}}$ is Ext-projective in $\mathrm{gen}(DY)$. The assertion follows by Lemma \ref{partial_tilting}.  
\end{proof}

Throughout the rest of this section, let $T^{\bullet} \in \mathscr{K}^{\mathrm{b}}(\mathcal{P}_{A})$ be a two-term tilting complex: 
\[
T^{\bullet} : \cdots \to 0 \to T^{-1} \overset{\alpha}{\to} T^{0} \to 0 \to \cdots .          
\]

\begin{lemma}\label{add_gen}
For any $M, N \in \mathrm{mod}\text{-}A$, the following hold. 
\begin{enumerate}
\item[(1)] $M \in \mathrm{add}(\mathrm{H}^{0}(T^{\bullet}))$ if and only if $M$ is $\mathrm{Ext}$-projective in $\mathrm{gen}(\mathrm{H}^{0}(T^{\bullet}))$. 
\item[(2)] $N \in \mathrm{add}(\mathrm{H}^{-1}({\nu}T^{\bullet}))$ if and only if $N$ is $\mathrm{Ext}$-injective in $\mathrm{cog}(\mathrm{H}^{-1}({\nu}T^{\bullet}))$.
\end{enumerate}
\end{lemma}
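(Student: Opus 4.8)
The plan is to phrase everything in terms of the stable torsion theory attached to $T^{\bullet}$. Set $H=\mathrm{H}^{0}(T^{\bullet})$ and $N=\mathrm{H}^{-1}(\nu T^{\bullet})$, and write $(\mathcal{T},\mathcal{F})=(\mathcal{T}(T^{\bullet}),\mathcal{F}(T^{\bullet}))$. By Proposition \ref{tilting_stable_torsion_theory} this is a stable torsion theory, and by Proposition \ref{Ext-proj-gen} we have $\mathcal{T}=\mathrm{gen}(H)$ with $H$ Ext-projective and $\mathcal{F}=\mathrm{cog}(N)$ with $N$ Ext-injective. In this language, (1) asserts that the Ext-projectives of $\mathcal{T}$ are exactly $\mathrm{add}(H)$ and (2) is the dual assertion for $\mathcal{F}$, $N$ and the Ext-injectives. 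Since $D$ sends $(\mathcal{T},\mathcal{F})$ to the opposite torsion theory in $\mathrm{mod}\text{-}A^{\mathrm{op}}$ and interchanges $\mathrm{gen}$ with $\mathrm{cog}$, Ext-projective with Ext-injective, and $\tau$ with $\tau^{-1}$ (exactly as in the passage from Lemma \ref{partial_tilting} to Lemma \ref{partial_cotilting}), I would prove (1) in full and then obtain (2) formally by dualizing; so the work is concentrated in (1).

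One inclusion is immediate: $H$ is Ext-projective in $\mathcal{T}=\mathrm{gen}(H)$, and Ext-projectivity is inherited by finite direct sums and by direct summands, so every $M\in\mathrm{add}(H)$ is Ext-projective in $\mathcal{T}$. For the converse I would reduce to $M$ indecomposable, both conditions being detected on indecomposable summands, and dispose of the projective case first: if $M$ is a projective $A$-module then $M\in\mathcal{T}=\mathrm{gen}(H)$ yields an epimorphism $H^{(n)}\twoheadrightarrow M$, which splits because $M$ is projective, so $M\in\mathrm{add}(H)$. (Dually, an indecomposable injective in $\mathcal{F}=\mathrm{cog}(N)$ embeds into some $N^{(m)}$ and the embedding splits, giving the injective case of (2).)

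The remaining case is $M$ indecomposable, non-projective and Ext-projective in $\mathcal{T}$, where Remark \ref{condition_Ext-proj-inj}(1) gives $\tau M\in\mathcal{F}$. Here I would take a minimal right $\mathrm{add}(H)$-approximation $\phi\colon H_{0}\to M$; as $M\in\mathrm{gen}(H)$ it is surjective, so $0\to K\to H_{0}\xrightarrow{\phi}M\to 0$ is exact, and applying $\mathrm{Hom}_{A}(H,-)$ together with $\mathrm{Ext}^{1}_{A}(H,H_{0})=0$ forces $\mathrm{Ext}^{1}_{A}(H,K)=0$. The aim is to split this sequence, which by Ext-projectivity of $M$ comes down to proving $K\in\mathcal{T}$: then $\mathrm{Ext}^{1}_{A}(M,K)=0$, the sequence splits, and $M$ is a summand of $H_{0}\in\mathrm{add}(H)$. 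To establish $K\in\mathcal{T}$ I would run $K$ through its canonical sequence $0\to tK\to K\to fK\to 0$ and try to show the torsion-free part $fK\in\mathcal{F}$ vanishes, combining $\mathrm{Ext}^{1}_{A}(H,K)=0$, the description $\mathcal{F}=\mathrm{cog}(N)$ with $N$ Ext-injective, and the constraint $\tau M\in\mathcal{F}$, so as to push $fK$ into $\mathcal{T}\cap\mathcal{F}=\{0\}$; the $\tau$-correspondence of Remark \ref{condition_Ext-proj-inj} linking the non-projective summands of $H$ to the non-injective summands of $N$ is the bookkeeping that should match the two sides.

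I expect this final step to be the main obstacle. Everything up to it uses only that $H$ is an Ext-projective generator, but the reverse inclusion — that $\mathcal{T}$ carries no Ext-projectives beyond $\mathrm{add}(H)$ — cannot hold for an arbitrary torsion class with an Ext-projective generator, so it must genuinely exploit that $(\mathcal{T},\mathcal{F})$ is a stable torsion theory arising from a tilting complex, with $\mathcal{F}$ cogenerated by the Ext-injective module $N$. Concretely, the approximation kernel only satisfies $\mathrm{Ext}^{1}_{A}(H,K)=0$ rather than $K\in\mathcal{T}$ a priori, and the crux is to upgrade this vanishing to torsionness of $K$ using the full strength of the stable torsion theory and the $\tau$/$\tau^{-1}$ characterizations of $\mathcal{T}$ and $\mathcal{F}$ in Remark \ref{condition_Ext-proj-inj}.
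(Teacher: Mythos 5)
Your proposal is not a complete proof: it stops exactly where the content of the lemma lies, and you say so yourself. The easy inclusion, the reduction to indecomposable $M$, the projective case, and the approximation setup are all correct (including the deduction $\mathrm{Ext}^{1}_{A}(H,K)=0$ from the right $\mathrm{add}(H)$-approximation property of $\phi$). But the entire lemma is the converse inclusion, which you reduce to the claim that the kernel $K$ lies in $\mathcal{T}$, and for that claim you offer only the intention to ``combine'' $\mathrm{Ext}^{1}_{A}(H,K)=0$, $\mathcal{F}=\mathrm{cog}(N)$ and $\tau M\in\mathcal{F}$ so as to force the torsion-free part of $K$ to vanish; no such combination is exhibited. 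As you correctly observe, this step cannot be formal: for the path algebra of $1\to 2$, with $P$ the projective cover of $S_{1}$, the class $\mathrm{gen}(P)=\mathrm{add}(P\oplus S_{1})$ is a torsion class (indeed part of a stable torsion theory) in which $P$ is Ext-projective, yet $S_{1}$ is also Ext-projective and $S_{1}\notin\mathrm{add}(P)$ --- here the two-term tilting complex produced by Theorem \ref{const_two_term_tilting} has $\mathrm{H}^{0}=P\oplus S_{1}$, not $P$. So the conclusion genuinely needs the hypothesis that $H$ is the full $\mathrm{H}^{0}$ of a tilting complex, and your sketch never says how that hypothesis enters the argument for $K\in\mathcal{T}$. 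Flagging this as ``the main obstacle'' is honest, but it means the proof has a hole at its only nontrivial step.

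The paper's proof avoids any local splitting argument and works globally. It sets $X$ equal to the direct sum of all indecomposable Ext-projectives of $\mathrm{gen}(\mathrm{H}^{0}(T^{\bullet}))$ that lie outside $\mathrm{add}(\mathrm{H}^{0}(T^{\bullet}))$, notes that $\mathrm{gen}(\mathrm{H}^{0}(T^{\bullet})\oplus X)=\mathrm{gen}(\mathrm{H}^{0}(T^{\bullet}))$ realizes the same stable torsion theory, and invokes Theorem \ref{const_two_term_tilting} to build a second two-term tilting complex $U^{\bullet}$ with $\mathrm{H}^{0}(T^{\bullet})\oplus X$ a summand of $\mathrm{H}^{0}(U^{\bullet})$. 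The decisive step is the claim $\mathrm{add}(T^{\bullet})=\mathrm{add}(U^{\bullet})$: one checks via Proposition \ref{tilting_stable_torsion_theory} that $W^{\bullet}=T^{\bullet}\oplus U^{\bullet}$ is again a tilting complex, then, writing $\Lambda=\mathrm{End}_{\mathscr{D}(A)}(W^{\bullet})$ with idempotents $e,f$ corresponding to $T^{\bullet},U^{\bullet}$, uses the induced derived equivalences to conclude that $e\Lambda$ and $f\Lambda$ are both progenerators of $\mathrm{mod}\text{-}\Lambda$, and transports this back to get $\mathrm{add}(T^{\bullet})=\mathrm{add}(W^{\bullet})=\mathrm{add}(U^{\bullet})$. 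This forces $X\in\mathrm{add}(\mathrm{H}^{0}(T^{\bullet}))$, i.e.\ $X=0$. That derived-category argument is precisely the input your approximation scheme is missing; to finish along your lines you would need an independent proof that $K\in\mathcal{T}$ (in later terminology, that the Ext-projectives of $\mathrm{gen}$ of a support $\tau$-tilting module are exactly its $\mathrm{add}$), which is a theorem in its own right rather than a bookkeeping step with $\mathcal{T}\cap\mathcal{F}=\{0\}$.
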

\begin{proof}
(1) We know from Proposition \ref{Ext-proj-gen} that $\mathrm{H}^{0}(T^{\bullet})$ is Ext-projective in $\mathrm{gen}(\mathrm{H}^{0}(T^{\bullet}))$. Let $X$ be the direct sum of all indecomposable non-projective Ext-projective modules in $\mathrm{gen}(\mathrm{H}^{0}(T^{\bullet}))$ which are not contained in $\mathrm{add}(\mathrm{H}^{0}(T^{\bullet}))$. Then $\mathrm{add}(\mathrm{H}^{0}(T^{\bullet}) \oplus X)$ coincides with the class of all Ext-projective modules in $\mathrm{gen}(\mathrm{H}^{0}(T^{\bullet}))$. On the other hand, since we have $\mathrm{gen}(\mathrm{H}^{0}(T^{\bullet})) = \mathrm{gen}(\mathrm{H}^{0}(T^{\bullet}) \oplus X)$, it follows by Propositions \ref{tilting_stable_torsion_theory} and \ref{Ext-proj-gen} that the pair  
\[
(\mathrm{gen}(\mathrm{H}^{0}(T^{\bullet}) \oplus X), \mathrm{cog}(\mathrm{H}^{-1}({\nu}T^{\bullet}))
\]
is a stable torsion theory in $\mathrm{mod}\text{-}A$. Let $P^{\bullet}$ be the minimal projective presentation of $\mathrm{H}^{0}(T^{\bullet}) \oplus X$ and $I^{\bullet}$ be the minimal injective presentation of $\mathrm{H}^{-1}({\nu}T^{\bullet})$, and set $U^{\bullet}=P^{\bullet} \oplus {\nu^{-1}}I^{\bullet}[1]$. Then $U^{\bullet}$ is a tilting complex by Theorem \ref{const_two_term_tilting}. 
\begin{claim_add_gen}
$\mathrm{add}(\mathrm{H}^{0}(T^{\bullet})) \subset \mathrm{add}(\mathrm{H}^{0}(T^{\bullet}) \oplus X) \subset \mathrm{add}(\mathrm{H}^{0}(U^{\bullet}))$.
\end{claim_add_gen}
\begin{proof}
The first inclusion is obvious. Since $\mathrm{H}^{0}(U^{\bullet}) = \mathrm{H}^{0}(P^{\bullet}) \oplus \mathrm{H}^{1}({\nu^{-1}}I^{\bullet}) \cong \mathrm{H}^{0}(T^{\bullet}) \oplus X \oplus \mathrm{H}^{1}({\nu^{-1}}I^{\bullet})$, the second inclusion follows. 
\end{proof}
\begin{claim_add_gen}
$\mathrm{add}(T^{\bullet}) = \mathrm{add}(U^{\bullet})$.
\end{claim_add_gen}
\begin{proof}
Set $W^{\bullet}=T^{\bullet} \oplus U^{\bullet}$. Then we have $\mathrm{gen}(\mathrm{H}^{0}(W^{\bullet})) = \mathrm{gen}(\mathrm{H}^{0}(U^{\bullet}))$ because $\mathrm{gen}(\mathrm{H}^{0}(T^{\bullet})) \subset \mathrm{gen}(\mathrm{H}^{0}(U^{\bullet}))$. Similarly, since $\mathrm{cog}(\mathrm{H}^{-1}({\nu}T^{\bullet})) \subset \mathrm{cog}(\mathrm{H}^{-1}({\nu}U^{\bullet}))$, we have $\mathrm{cog}(\mathrm{H}^{-1}({\nu}W^{\bullet})) = \mathrm{cog}(\mathrm{H}^{-1}({\nu}U^{\bullet}))$. It then follows by Proposition \ref{tilting_stable_torsion_theory} that the pair 
\[
(\mathrm{gen}(\mathrm{H}^{0}(W^{\bullet})), \mathrm{cog}(\mathrm{H}^{-1}({\nu}W^{\bullet})))
\]
is a stable torsion theory for $\mathrm{mod}\text{-}A$, and hence $W^{\bullet} \in \mathscr{K}^{\mathrm{b}}(\mathcal{P}_{A})$ is a tilting complex. We set $\Lambda = \mathrm{End}_{\mathscr{D}(A)}(W^{\bullet})$. We denote by $e \in \Lambda$ the idempotent corresponding to $T^{\bullet}$ and by $f \in \Lambda$ the idempotent corresponding to $U^{\bullet}$, i.e., $e{\Lambda}e \cong \mathrm{End}_{\mathscr{D}(A)}(T^{\bullet})$ and $f{\Lambda}f \cong \mathrm{End}_{\mathscr{D}(A)}(U^{\bullet})$. Since $\Lambda_{\Lambda} \cong e{\Lambda} \oplus f{\Lambda}$, the derived equivalence 
\[
F : \mathscr{D}^{\mathrm{b}}(\mathrm{mod}\text{-}\Lambda) \overset{\sim}{\to} \mathscr{D}^{\mathrm{b}}(\mathrm{mod}\text{-}A), \Lambda_{\Lambda} \mapsto W^{\bullet}
\]
induces 
\[
\mathscr{D}^{\mathrm{b}}(\mathrm{mod}\text{-}\Lambda) \overset{\sim}{\to} \mathscr{D}^{\mathrm{b}}(\mathrm{mod}\text{-}e{\Lambda}e), e{\Lambda} \mapsto e{\Lambda}e.
\]
Thus, $e{\Lambda} \in \mathcal{P}_{\Lambda}$ is a projective generator in $\mathrm{mod}\text{-}\Lambda$, i.e., $\Lambda \in \mathrm{add}(e{\Lambda})$. Applying the quasi-inverse of $F$, we have $W^{\bullet} \in \mathrm{add}(T^{\bullet})$ by the additivity of $F$. Similarly, $\Lambda \in \mathrm{add}(f{\Lambda})$ and hence $W^{\bullet} \in \mathrm{add}(U^{\bullet})$. It follows that $\mathrm{add}(T^{\bullet}) = \mathrm{add}(W^{\bullet}) = \mathrm{add}(U^{\bullet})$. 
\end{proof}
By the above claims, we have $\mathrm{add}(\mathrm{H}^{0}(T^{\bullet})) = \mathrm{add}(\mathrm{H}^{0}(T^{\bullet}) \oplus X)$. The assertion follows. \\
\indent
(2) Note first that $\mathrm{Hom}^{\bullet}_{A}(T^{\bullet},A) \in \mathscr{K}^{\mathrm{b}}(\mathcal{P}_{A^{\mathrm{op}}})$ is a two-term tilting complex, where $\mathrm{Hom}^{\bullet}(-,-)$ denotes the single complex associated with the double hom complex. We know from (1) that  $M \in \mathrm{add}(\mathrm{H}^{1}(\mathrm{Hom}^{\bullet}_{A}(T^{\bullet},A)))$ if and only if $M$ is Ext-projective in $\mathrm{gen}(\mathrm{H}^{1}(\mathrm{Hom}^{\bullet}_{A}(T^{\bullet},A)))$. Since $D(\mathrm{gen}(\mathrm{H}^{1}(\mathrm{Hom}^{\bullet}_{A}(T^{\bullet},A))) \cong \mathrm{cog}(\mathrm{H}^{-1}({\nu}T^{\bullet}))$, it follows that $M$ is Ext-projective in $\mathrm{gen}(\mathrm{H}^{1}(\mathrm{Hom}^{\bullet}_{A}(T^{\bullet},A)))$ if and only if $DM$ is Ext-injective in $\mathrm{cog}(\mathrm{H}^{-1}({\nu}T^{\bullet}))$. Also, since there exists an equivalence $D(\mathrm{add}(\mathrm{H}^{1}(\mathrm{Hom}^{\bullet}_{A}(T^{\bullet},A))) \cong \mathrm{add}(\mathrm{H}^{-1}({\nu}T^{\bullet}))$, the assertion follows. 
\end{proof}

The next theorem is a direct consequence of the previous three lemmas. We set $\mathfrak{a} = \mathrm{ann}_{A}(\mathrm{H}^{0}(T^{\bullet}))$ and $\mathfrak{a}^{\prime} = \mathrm{ann}_{A}(\mathrm{H}^{-1}({\nu}T^{\bullet}))$.  

\begin{theorem}\label{tilting_module}
The following hold. 
\begin{enumerate}
\item[(1)] $\mathrm{H}^{0}(T^{\bullet})$ is a tilting module in $\mathrm{mod}\text{-}A/\mathfrak{a}$. 
\item[(2)] $\mathrm{H}^{-1}({\nu}T^{\bullet})$ is a cotilting module in $\mathrm{mod}\text{-}A/\mathfrak{a}^{\prime}$, i.e., $D(\mathrm{H}^{-1}({\nu}T^{\bullet}))$ is a tilting module in $\mathrm{mod}\text{-}{(A/\mathfrak{a}^{\prime})}^{\mathrm{op}}$. 
\end{enumerate}
\end{theorem}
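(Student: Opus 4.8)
The plan is to check, for $X := \mathrm{H}^{0}(T^{\bullet})$ and $B := A/\mathfrak{a}$, the three defining axioms of a tilting module in the sense of \cite{HR}: $\mathrm{proj\ dim}\ X_{B} \le 1$, $\mathrm{Ext}^{1}_{B}(X,X)=0$, and the existence of a short exact sequence $0 \to B \to X^{0} \to X^{1} \to 0$ with $X^{0}, X^{1} \in \mathrm{add}(X_{B})$; part (2) will then follow by duality. The input is Proposition \ref{Ext-proj-gen}(1), which tells us that $X$ is Ext-projective in $\mathrm{gen}(X_{A})$, so all hypotheses of Lemma \ref{partial_tilting} hold for this $X$. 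Parts (1) and (2) of that lemma give the first two axioms outright. Part (3) produces the required exact sequence with $X^{0} \in \mathrm{add}(X_{B})$, but only tells us a priori that $X^{1}$ is Ext-projective in $\mathrm{gen}(X_{B})$; to close the argument I would invoke Lemma \ref{add_gen}(1) to upgrade this to $X^{1} \in \mathrm{add}(X_{B})$, after which the sequence is exactly the third tilting axiom.

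The step I expect to require the most care, and the main obstacle, is that the two occurrences of ``Ext-projective'' are computed over different rings: Lemma \ref{partial_tilting}(3) furnishes $X^{1}$ that is Ext-projective for $\mathrm{Ext}^{1}_{B}$, whereas Lemma \ref{add_gen}(1) characterises membership in $\mathrm{add}(X)$ through Ext-projectivity for $\mathrm{Ext}^{1}_{A}$. To bridge them I would use that $\mathrm{gen}(X_{B}) = \mathrm{gen}(X_{A}) = \mathcal{T}(T^{\bullet})$ is a torsion class for $\mathrm{mod}\text{-}A$ (Propositions \ref{tilting_stable_torsion_theory} and \ref{Ext-proj-gen}(1)), hence closed under extensions by Remark \ref{rem_torsion_theory}(1). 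Thus for any $M, N \in \mathrm{gen}(X)$ every extension of $M$ by $N$ in $\mathrm{mod}\text{-}A$ again lies in $\mathrm{gen}(X) \subseteq \mathrm{mod}\text{-}B$, so it is automatically an extension in $\mathrm{mod}\text{-}B$; together with the full faithfulness of the restriction $\mathrm{mod}\text{-}B \hookrightarrow \mathrm{mod}\text{-}A$ this makes the comparison map $\mathrm{Ext}^{1}_{B}(M,N) \to \mathrm{Ext}^{1}_{A}(M,N)$ bijective on $\mathrm{gen}(X)$. Consequently $X^{1}$ being Ext-projective over $B$ in $\mathrm{gen}(X)$ is the same as being Ext-projective over $A$, and Lemma \ref{add_gen}(1) applies to give $X^{1} \in \mathrm{add}(X_{B})$, completing (1).

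For part (2) I would avoid repeating the computation and instead reduce to (1) over the opposite algebra (which again satisfies the standing assumptions). Setting $T^{\bullet *} = \mathrm{Hom}^{\bullet}_{A}(T^{\bullet}, A)$, a two-term tilting complex over $A^{\mathrm{op}}$ as already observed in the proof of Lemma \ref{add_gen}(2), the standard identification $\nu P \cong D\mathrm{Hom}_{A}(P,A)$ for projective $P$ yields $\nu T^{\bullet} \cong D(T^{\bullet *})$, whence $D(\mathrm{H}^{-1}(\nu T^{\bullet})) \cong \mathrm{H}^{1}(T^{\bullet *}) = \mathrm{H}^{0}(T^{\bullet *}[1])$. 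Since $T^{\bullet *}[1]$ is again a two-term tilting complex over $A^{\mathrm{op}}$, applying part (1) to it shows $D(\mathrm{H}^{-1}(\nu T^{\bullet}))$ is a tilting module over $A^{\mathrm{op}}/\mathrm{ann}_{A^{\mathrm{op}}}(D(\mathrm{H}^{-1}(\nu T^{\bullet}))) = (A/\mathfrak{a}^{\prime})^{\mathrm{op}}$, which is precisely the assertion that $\mathrm{H}^{-1}(\nu T^{\bullet})$ is a cotilting module over $A/\mathfrak{a}^{\prime}$. Alternatively, and more in the spirit of the paper, (2) can be obtained by mirroring the argument for (1) with Lemma \ref{partial_cotilting} and Lemma \ref{add_gen}(2) in place of Lemma \ref{partial_tilting} and Lemma \ref{add_gen}(1), the analogous bridge between the two Ext-injectivity notions now coming from the fact that $\mathcal{F}(T^{\bullet}) = \mathrm{cog}(\mathrm{H}^{-1}(\nu T^{\bullet}))$ is a torsion-free class and hence also closed under extensions.
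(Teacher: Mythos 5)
Your proof is correct and follows essentially the same route as the paper: Theorem \ref{tilting_module} is presented there as a direct consequence of Lemmas \ref{partial_tilting}, \ref{partial_cotilting}, and \ref{add_gen} (with Proposition \ref{Ext-proj-gen} supplying the Ext-projectivity hypothesis), which is exactly the assembly you carry out. Your explicit check that $\mathrm{Ext}^{1}_{A/\mathfrak{a}}$ and $\mathrm{Ext}^{1}_{A}$ agree on $\mathrm{gen}(\mathrm{H}^{0}(T^{\bullet}))$ --- needed to pass from Lemma \ref{partial_tilting}(3) to Lemma \ref{add_gen}(1) --- fills in a detail the paper leaves implicit, and your duality reduction for part (2) mirrors how the paper itself derives Lemma \ref{partial_cotilting} and Lemma \ref{add_gen}(2) from their counterparts.
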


\begin{remark}\label{covariant_finite}
From \cite{Sm} and the above theorem, we know that $\mathcal{T}(T^{\bullet})$ (resp., $\mathcal{F}(T^{\bullet}))$ is covariantly (resp., contravariantly) finite subcategory of $\mathrm{mod}\text{-}A$. 
\end{remark}

As the final result, we determine the endomorphism algebras of $\mathrm{H}^{0}(T^{\bullet})$ and $\mathrm{H}^{-1}({\nu}T^{\bullet})$. It is easy to see that the homomorphism
\[
\mathrm{H}^{0}(-) : \mathrm{End}_{\mathscr{K}(A)}(T^{\bullet}) \to \mathrm{End}_{A/\mathfrak{a}}(\mathrm{H}^{0}(T^{\bullet})), \varphi \mapsto \mathrm{H}^{0}(\varphi)
\]
is a surjective algebra homomorphism. Thus, we need only to calculate the kernel of the above algebra homomorphism. In order to do this, we deal with $\mathrm{Hom}_{\mathscr{K}(A)}(A,T^{\bullet})$ instead of $\mathrm{H}^{0}(T^{\bullet})$. This is justified by the fact that there exists an isomorphism $\mathrm{H}^{0}(T^{\bullet}) \cong \mathrm{Hom}_{\mathscr{K}(A)}(A,T^{\bullet})$ as right $A$-modules. Similarly, we may deal with $\mathrm{Hom}_{\mathscr{K}(A)}(A,{\nu}T^{\bullet}[-1])$ instead of $\mathrm{H}^{-1}({\nu}T^{\bullet})$. We set $B= \mathrm{End}_{\mathscr{K}(A)}(T^{\bullet})$ and set
\[
\mathfrak{b} = \mathrm{ann}_{B}(\mathrm{Hom}_{\mathscr{K}(A)}(A,T^{\bullet})), \ 
\mathfrak{b}^{\prime} = \mathrm{ann}_{B}(\mathrm{Hom}_{\mathscr{K}(A)}(A,{\nu}T^{\bullet}[-1])).
\]

\begin{theorem}\label{endring}
We have the following algebra isomorphisms. 
\begin{enumerate}
\item[(1)] $\mathrm{End}_{A/\mathfrak{a}}(\mathrm{Hom}_{\mathscr{K}(A)}(A,T^{\bullet})) \cong B/\mathfrak{b}$. 
\item[(2)] $\mathrm{End}_{A/\mathfrak{a}^{\prime}}(\mathrm{Hom}_{\mathscr{K}(A)}(A,{\nu}T^{\bullet}[-1])) \cong B/\mathfrak{b}^{\prime}$. 
\end{enumerate}
\end{theorem}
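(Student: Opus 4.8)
The plan is to deduce both isomorphisms from the first isomorphism theorem applied to the surjective algebra homomorphism $\mathrm{H}^{0}(-)\colon B \to \mathrm{End}_{A/\mathfrak{a}}(\mathrm{H}^{0}(T^{\bullet}))$ recorded just before the statement; since surjectivity is already granted, the whole content is the computation of the kernel, and I would show this kernel is exactly $\mathfrak{b}$. The reason the statement is phrased through $\mathrm{Hom}_{\mathscr{K}(A)}(A,T^{\bullet})$ rather than through $\mathrm{H}^{0}(T^{\bullet})$ is that the former carries a transparent left $B$-module structure by post-composition, and $\mathfrak{b}$ is by definition the annihilator of this module. So the first thing I would do is upgrade the isomorphism $\mathrm{Hom}_{\mathscr{K}(A)}(A,T^{\bullet}) \cong \mathrm{H}^{0}(T^{\bullet})$ of right $A$-modules to an isomorphism of $(B,A)$-bimodules.

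For (1): a chain map $A \to T^{\bullet}$ is just an $A$-homomorphism $g\colon A \to T^{0}$ (there is no constraint, since $T^{1}=0$), and the isomorphism sends $g$ to the class $\overline{g(1)} \in \mathrm{Cok}\,\alpha = \mathrm{H}^{0}(T^{\bullet})$. I would check that for $\varphi \in B$ with degree-zero component $\varphi^{0}$, post-composition $\varphi \circ g$ is sent to $\overline{\varphi^{0}(g(1))} = \mathrm{H}^{0}(\varphi)(\overline{g(1)})$, so that the structure map of the left $B$-module $\mathrm{Hom}_{\mathscr{K}(A)}(A,T^{\bullet})$ is precisely $\mathrm{H}^{0}(-)$. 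Its kernel is then tautologically $\{\varphi : \mathrm{H}^{0}(\varphi)=0\}$; since the classes $\overline{g(1)}$ exhaust $\mathrm{H}^{0}(T^{\bullet})$, such a $\varphi$ kills the module if and only if $\mathrm{H}^{0}(\varphi)=0$, that is, $\mathfrak{b}=\mathrm{ann}_{B}(\mathrm{Hom}_{\mathscr{K}(A)}(A,T^{\bullet}))=\ker \mathrm{H}^{0}(-)$. Combined with surjectivity (which I would re-derive if needed by lifting any $\psi \in \mathrm{End}_{A}(\mathrm{Cok}\,\alpha)$ first along the epimorphism $T^{0}\twoheadrightarrow \mathrm{Cok}\,\alpha$ and then along $\alpha$, using only that $T^{-1},T^{0}$ are projective), the first isomorphism theorem gives (1); here $\mathrm{End}_{A/\mathfrak{a}}=\mathrm{End}_{A}$ because $\mathfrak{a}$ annihilates $\mathrm{H}^{0}(T^{\bullet})$.

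For (2): I would run the dual argument with $\nu T^{\bullet}[-1]$ in place of $T^{\bullet}$. This complex is $\nu T^{-1}\xrightarrow{\nu\alpha}\nu T^{0}$ in degrees $0,1$, its terms are injective, and $\mathrm{H}^{0}(\nu T^{\bullet}[-1])=\mathrm{Ker}\,\nu\alpha=\mathrm{H}^{-1}(\nu T^{\bullet})$. Because $\nu$ is an equivalence $\mathcal{P}_{A}\xrightarrow{\sim}\mathcal{I}_{A}$ and the shift is an auto-equivalence, $\mathrm{End}_{\mathscr{K}(A)}(\nu T^{\bullet}[-1])\cong B$, and the left $B$-module $\mathrm{Hom}_{\mathscr{K}(A)}(A,\nu T^{\bullet}[-1])\cong \mathrm{H}^{-1}(\nu T^{\bullet})$ again has structure map $\mathrm{H}^{0}(-)$, so its kernel is $\mathfrak{b}'$ by the same token. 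The only genuinely different point is surjectivity: here I would lift $\psi \in \mathrm{End}_{A}(\mathrm{Ker}\,\nu\alpha)$ by extending along $\mathrm{Ker}\,\nu\alpha \hookrightarrow \nu T^{-1}$ using injectivity of $\nu T^{-1}$, and then extending the resulting map on $\mathrm{Im}\,\nu\alpha$ along $\mathrm{Im}\,\nu\alpha \hookrightarrow \nu T^{0}$ using injectivity of $\nu T^{0}$. Alternatively one can apply (1) verbatim to the two-term tilting complex $\mathrm{Hom}^{\bullet}_{A}(T^{\bullet},A)[1]$ over $A^{\mathrm{op}}$, whose $0$-th homology is $D(\mathrm{H}^{-1}(\nu T^{\bullet}))$ via $\nu T^{\bullet}\cong D\,\mathrm{Hom}^{\bullet}_{A}(T^{\bullet},A)$, and dualize back.

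I expect no deep obstacle: the theorem is essentially the first isomorphism theorem once the kernel is identified with the annihilator. The care needed is purely bookkeeping, and it concentrates in (2): correctly matching $\mathrm{End}_{\mathscr{K}(A)}(\nu T^{\bullet}[-1])$ with $B$ and verifying that the induced $B$-action is the one implicit in the definition of $\mathfrak{b}'$ (and, along the alternative route, reconciling $\nu$, $D$, the shift, and the opposite rings so that the annihilator ideals correspond). The injective-side surjectivity for (2) is the single step that is not literally symmetric to (1), which is why I would give it the explicit extension argument above.
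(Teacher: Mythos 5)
Your proposal is correct and follows essentially the same route as the paper: both apply the first isomorphism theorem to the surjection $B \to \mathrm{End}_{A/\mathfrak{a}}(\mathrm{Hom}_{\mathscr{K}(A)}(A,T^{\bullet}))$ induced by taking homology, and identify its kernel with the annihilator ideal $\mathfrak{b}$ (resp.\ $\mathfrak{b}^{\prime}$, via $B \cong \mathrm{End}_{\mathscr{K}(A)}({\nu}T^{\bullet})$ for part (2)). Your $(B,A)$-bimodule equivariance statement packages exactly the element/homotopy computation the paper carries out (null-homotopic chain maps $A \to T^{\bullet}$ corresponding to classes in $\mathrm{Im}\ \alpha$, post-composition corresponding to $\mathrm{H}^{0}(\varphi)$), and your explicit lifting/extension arguments merely supply the surjectivity that the paper asserts as easy.
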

\begin{proof}
(1) Since there exists a surjective algebra homomorphism 
\[
\theta : B \to \mathrm{End}_{A/\mathfrak{a}}(\mathrm{Hom}_{\mathscr{K}(A)}(A,T^{\bullet})),
\]
which is induced by the functor $\mathrm{H}^{0}(-)$, we have an algebra isomorphism 
\[
\mathrm{End}_{A/\mathfrak{a}}(\mathrm{Hom}_{\mathscr{K}(A)}(A,T^{\bullet})) \cong B/\mathrm{Ker}\ \theta. 
\]
We will show that $\mathrm{Ker}\ \theta = \mathfrak{b}$. Let $\varphi \in B$: 
\[
\begin{CD}
T^{\bullet} @. : \cdots @>>> 0 @>>> T^{-1} @>{\alpha}>> T^{0} @>>> 0 @>>> \cdots \\
@V{\varphi}VV. @. @VVV  @V{{\varphi}^{-1}}VV @VV{{\varphi}^{0}}V @VVV @.  \\
T^{\bullet} @. : \cdots @>>> 0 @>>> T^{-1} @>>{\alpha}> T^{0} @>>> 0 @>>> \cdots .             
\end{CD}
\]
Then we have a commutative diagram with exact rows 
\[
\begin{CD}
T^{-1} @>{\alpha}>> T^{0} @>{\varepsilon}>> \mathrm{Cok}\ \alpha @>>> 0 \\
@V{{\varphi}^{-1}}VV @VV{{\varphi}^{0}}V @VV{\theta(\varphi)}V @.  \\
T^{-1} @>>{\alpha}> T^{0} @>>{\varepsilon}> \mathrm{Cok}\ \alpha @>>> 0.           
\end{CD}
\]
We assume first that $\theta(\varphi) \ne 0$. Then there exists $t \in T^{0}$ such that $(\theta(\varphi) \circ \varepsilon)(t) \ne 0$. We define $\psi : A \to T^{0}, 1_{A} \mapsto t$. Then $(\varphi^{0} \circ \psi)(1_{A}) \notin \mathrm{Ker}\ \varepsilon = \mathrm{Im}\ \alpha$. Since $(\alpha \circ h)(1_{A}) \in \mathrm{Im}\ \alpha$ for all $h \in \mathrm{Hom}_{A}(A, T^{-1})$. Therefore, $\varphi \circ \psi$ is not homotopic to zero and hence $\varphi \notin \mathfrak{b}$. Thus, we have $\mathfrak{b} \subset \mathrm{Ker}\ \theta$. Conversely, we assume that $\theta(\varphi) = 0$. Since $\varphi^{0}$ factors through $\mathrm{Im}\ \alpha$, there exists $h \in \mathrm{Hom}_{A}(T^{0}, T^{-1})$ such that $\varphi^{0}=\alpha \circ h$ by the projectivity of $T^{0}$. Thus, for any $\sigma \in \mathrm{Hom}_{\mathscr{K}(A)}(A,T^{\bullet})$, we have $\varphi^{0} \circ \sigma = \alpha \circ h \circ \sigma$. Therefore, $\varphi \circ \sigma$ is homotopic to zero and hence $\varphi \in \mathfrak{b}$. This shows that $\mathrm{Ker}\ \theta \subset  \mathfrak{b}$. \\
\indent
(2) Since $\nu : \mathscr{K}^{\mathrm{b}}(\mathcal{P}_{A}) \overset{\sim}{\to} \mathscr{K}^{\mathrm{b}}(\mathcal{I}_{A})$, we have $B \cong \mathrm{End}_{\mathscr{K}(A)}({\nu}T^{\bullet})$ as algebras. It is easy to see that there exists a surjective algebra homomorphism 
\[
\theta^{\prime} : B \to \mathrm{End}_{A/\mathfrak{a^{\prime}}}(\mathrm{Hom}_{\mathscr{K}(A)}(A,{\nu}T^{\bullet}[-1])),
\]
which is induced by the functor $\mathrm{H}^{-1}(-)$. We will show that $\mathrm{Ker}\ \theta^{\prime} = \mathfrak{b}^{\prime}$. Let $\phi \in \mathrm{End}_{\mathscr{K}(A)}({\nu}T^{\bullet})$: 
\[
\begin{CD}
{\nu}T^{\bullet} @. : \cdots @>>> 0 @>>> {\nu}T^{-1} @>{\beta}>> {\nu}T^{0} @>>> 0 @>>> \cdots \\
@V{\phi}VV. @. @VVV  @V{{{\phi}^{-1}}}VV @VV{{{\phi}^{0}}}V @VVV @.  \\
{\nu}T^{\bullet} @. : \cdots @>>> 0 @>>> {\nu}T^{-1} @>>{\beta}> {\nu}T^{0} @>>> 0 @>>> \cdots,          
\end{CD}
\]
where $\beta = \nu(\alpha)$. Then we have a commutative diagram with exact rows 
\[
\begin{CD}
0 @>>> \mathrm{Ker}\ \beta @>{\iota}>> {\nu}T^{-1} @>{\beta}>> {\nu}T^{0} \\
@. @V{\theta^{\prime}(\phi)}VV @V{{\phi}^{-1}}VV @VV{{\phi}^{0}}V \\
0 @>>> \mathrm{Ker}\ \beta @>>{\iota}> {\nu}T^{-1} @>>{\beta}> {\nu}T^{0}. 
\end{CD}
\]
We assume first that $\theta^{\prime}(\phi) \ne 0$. Then there exists $x \in \mathrm{Ker}\ \beta$ such that $(\phi^{-1} \circ \iota)(x) = (\iota \circ \theta^{\prime}(\phi))(x) \ne 0$. We set $t = \iota(x) \in {\nu}T^{-1}$ and define $\eta : A \to {\nu}T^{-1}, 1_{A} \mapsto t$. Then, since $\eta$ satisfies $\beta \circ \eta = 0$ and $\phi^{-1} \circ \eta \ne 0$, we have $\eta \in \mathrm{Hom}_{\mathscr{K}(A)}(A,{\nu}T^{\bullet}[-1])$ and $\eta$ is not homotopic to zero. Thus, $\mathfrak{b^{\prime}} \subset \mathrm{Ker}\ \theta^{\prime}$. Conversely, we assume that $\theta^{\prime}(\phi) = 0$. For any $\rho \in \mathrm{Hom}_{\mathscr{K}(A)}(A,{\nu}T^{\bullet}[-1])$, we have $\phi^{-1} \circ \rho =0$ because $\rho$ factors through $\mathrm{Ker}\ \beta$. Thus, we have $\phi \in \mathfrak{b}^{\prime}$ and hence $\mathrm{Ker}\ \theta^{\prime} \subset  \mathfrak{b}^{\prime}$.   
\end{proof}

\section{Example}

In this section, we demonstrate our results through an example. Let $A$ be the path algebra defined by the quiver 
\[
\xymatrix{
 & 2 \ar [rd]^{\gamma} & \\
1 \ar [ru]^{\alpha} \ar [rd]_{\beta} & & 4 \\
 & 3 \ar [ru]_{\delta}&
}
\]
with relations $\alpha\gamma=\beta\delta=0$. We denote by $e_{i}$ the empty path corresponding to the vertex $i=1, \cdots, 4$. The Auslander--Reiten quiver of $A$ is given by the following: 
\[
\begin{xy}
(-40,10) *{\begin{smallmatrix} 2 \\ 4 \end{smallmatrix}}="P2",
(-50,0) *{\begin{smallmatrix} 4 \end{smallmatrix}}="P4",
(-40,-10) *{\begin{smallmatrix} 3 \\ 4 \end{smallmatrix}}="P3",
(-30,0) *{\begin{smallmatrix} 2 \ 3 \\ 4 \end{smallmatrix}}="I4",
(-20,10) *{\begin{smallmatrix} 3 \end{smallmatrix}}="S3",
(-20,-10) *{\begin{smallmatrix} 2 \end{smallmatrix}}="S2",
(-10,0) *{\begin{smallmatrix} 1 \\ 2 \ 3 \end{smallmatrix}}="P1",
(0,10) *{\begin{smallmatrix} 1 \\ 2 \end{smallmatrix}}="I2",
(0,-10) *{\begin{smallmatrix} 1 \\ 3 \end{smallmatrix}}="I3",
(10,0) *{\begin{smallmatrix} 1 \end{smallmatrix}}="I1",
\ar "P4" ; "P2"  
\ar "P4" ; "P3" 
\ar "P2" ; "I4"
\ar "P3" ; "I4"  
\ar@{.} "P4" ; "I4"
\ar "I4" ; "S3"  
\ar "I4" ; "S2" 
\ar "S3" ; "P1"
\ar "S2" ; "P1"  
\ar@{.} "P2" ; "S3"
\ar@{.} "P3" ; "S2"
\ar "P1" ; "I2"  
\ar "P1" ; "I3" 
\ar "I2" ; "I1"
\ar "I3" ; "I1"  
\ar@{.} "S3" ; "I2"
\ar@{.} "S2" ; "I3"
\ar@{.} "P1" ; "I1"
\end{xy}
\]
where each indecomposable module is represented by its composition factors and $\tau$-orbits are denoted by $\xymatrix{\bullet \ar@{.} [r] & \bullet}$. It is not difficult to see that the following pair gives a stable torsion theory for $\mathrm{mod}\text{-}A$: 
\[
\mathcal{T}=\{\begin{smallmatrix} 1 \\ 2 \ 3 \end{smallmatrix}, \begin{smallmatrix} 1 \\ 2 \end{smallmatrix}, \begin{smallmatrix} 1 \\ 3 \end{smallmatrix}, \begin{smallmatrix} 1 \end{smallmatrix} \}\ \text{and} \ \mathcal{F} = \{\begin{smallmatrix} 4 \end{smallmatrix}, \begin{smallmatrix} 2 \\ 4 \end{smallmatrix}, \begin{smallmatrix} 3 \\ 4 \end{smallmatrix}, \begin{smallmatrix} 2 \ 3 \\ 4 \end{smallmatrix}, \begin{smallmatrix} 3 \end{smallmatrix}, \begin{smallmatrix} 2 \end{smallmatrix} \},
\]
where $\mathcal{T}$ is a torsion class and $\mathcal{F}$ is a torsion-free class. We set 
\[
X= \begin{smallmatrix} 1\\ 2 \ 3 \end{smallmatrix}, \quad Y=\begin{smallmatrix} 2\ 3 \\ 4 \end{smallmatrix} \oplus \begin{smallmatrix} 3 \end{smallmatrix} \oplus \begin{smallmatrix} 2 \end{smallmatrix}. 
\]
Then $\mathcal{T}=\mathrm{gen}(X)$ and $X$ is Ext-projective in $\mathcal{T}$, and $\mathcal{F}= \mathrm{cog}(Y)$ and $Y$ is Ext-injective in $\mathcal{F}$. According to Theorem \ref{const_two_term_tilting}, we have a two-term tilting complex $T^{\bullet}=T^{\bullet}_{1} \oplus T^{\bullet}_{2} \oplus T^{\bullet}_{3} \oplus T^{\bullet}_{4}$, where
\[
T^{\bullet}_{1}= 0 \to \begin{smallmatrix} 1\\ 2 \ 3 \end{smallmatrix}, \quad T^{\bullet}_{2}= \begin{smallmatrix} 2 \\ 4 \end{smallmatrix} \to \begin{smallmatrix} 1\\ 2 \ 3 \end{smallmatrix}, \quad T^{\bullet}_{3}= \begin{smallmatrix} 3 \\ 4 \end{smallmatrix} \to \begin{smallmatrix} 1\\ 2 \ 3 \end{smallmatrix}, \quad T^{\bullet}_{4}= \begin{smallmatrix} 4 \end{smallmatrix} \to 0. 
\]
Thus, we have 
\[
\mathrm{H}^{0}(T^{\bullet})=\begin{smallmatrix} 1\\ 2 \ 3 \end{smallmatrix} \oplus \begin{smallmatrix} 1 \\ 3 \end{smallmatrix} \oplus \begin{smallmatrix} 1 \\ 2 \end{smallmatrix}
\]
as a right $A$-module. Since $\mathfrak{a} = \mathrm{ann}_{A}(\mathrm{H}^{0}(T^{\bullet}))$ is a two-sided ideal generated by $e_{4}, \gamma, \delta$, the factor algebra $A/\mathfrak{a}$ is defined by the quiver 
\[
\xymatrix{
 & 2  \\
1 \ar [ru]^{\alpha} \ar [rd]_{\beta} &  \\
 & 3 
}
\]
without relations. Next, it is not difficult to see that $B= \mathrm{End}_{\mathscr{K}(A)}(T^{\bullet})$ is defined by the quiver 
\[
\xymatrix{
 & 2 \ar [ld]_{\lambda} \ar [rd]^{\nu} & \\
1 & & 4 \\
 & 3 \ar [lu]^{\mu} \ar [ru]_{\xi}&
}
\]
without relations. Then we have 
\begin{align*}
\mathrm{Hom}_{\mathscr{K}(A)}(A,T^{\bullet}) &= \bigoplus_{i=1}^{4}\mathrm{Hom}_{\mathscr{K}(A)}(e_{i}A,T^{\bullet}) \\
&= \begin{smallmatrix} 1\\ 2 \ 3 \end{smallmatrix} \oplus \begin{smallmatrix} 1 \\ 3 \end{smallmatrix} \oplus \begin{smallmatrix} 1 \\ 2 \end{smallmatrix} \oplus 0 
\end{align*}
as a left $B$-module. Thus, $\mathfrak{b} = \mathrm{ann}_{B}(\mathrm{Hom}_{\mathscr{K}(A)}(A,T^{\bullet}))$ is a two-sided ideal generated by $\nu, \xi$ and the empty path corresponding to the vertex $4$. Therefore, the factor algebra $B/\mathfrak{b}$ is defined by the quiver 
\[
\xymatrix{
 & 2 \ar [ld]_{\lambda} \\
1 & \\
 & 3 \ar [lu]^{\mu} 
}
\]
without relations. It follows by Theorems \ref{tilting_module} and \ref{endring} that $A/\mathfrak{a}$ and $B/\mathfrak{b}$ are derived equivalent to each other.

\vspace{5pt}
\begin{flushleft}
Institute of Mathematics \\
University of Tsukuba \\
Ibaraki 305-8571, JAPAN \\
{\it E-mail address}: {\rm abeh@math.tsukuba.ac.jp} 
\end{flushleft}

\end{document}